\newtheorem{theorem}{Theorem}[section]
\newtheorem{lemma}[theorem]{Lemma}
\newtheorem{corollary}[theorem]{Corollary}
\newtheorem{proposition}[theorem]{Proposition}
\theoremstyle{definition}
\newtheorem{definition}[theorem]{Definition}
\theoremstyle{remark}
\newtheorem{remark}[theorem]{Remark}
\numberwithin{equation}{section}
\def \a{{\alpha}}
\def \b{{\beta}}
\def \D{{\Delta}}
\def \d{{\delta}}
\def \e{{\varepsilon}}
\def \g{{\gamma}}
\def \k{{\kappa}}
\def \l{{\lambda}}
\def \o{{\omega}}
\def \p{{\varphi}}
\def \t{{\vartheta}}
\def \m{{\mu}}
\def \s{{\sigma}}
\def \N{{\bf N}}
\def \qq{{\qquad}}
\def \R{{\bf R}}
\def \T{{\bf T}}
\def \ua{{\bf a}}
\def \uc{{\bf c}}
\def \Z{{\bf Z}}
\def \dd{{\rm d}}
\def \noi{{\noindent}}
\def \T{{\mathbb T}}
\def\R{{\mathbb R}}
\def\Z{{\mathbb Z}}
\def\N{{\mathbb N}}
\font\gsec= cmb10 at 10 pt
\font\gsec= cmb10 at 9,8  pt
\begin{document}

\title{On  series of dilated functions}

 \author{
 Istv\'an Berkes and
Michel   WEBER}
 \address{Michel Weber: IRMA, 10  rue du G\'en\'eral Zimmer, 67084
 Strasbourg Cedex, France}
  \email{michel.weber@math.unistra.fr \!;  \! m.j.g.weber@gmail.com}
\address{Istvan Berkes, Technische Universitat Graz, Institut
f\"ur Statistik, M\"unzgrabenstrasse  11, A-8010 Graz, Austria.}
\email{berkes@tugraz.at}


\keywords{dilated functions, mean convergence, almost convergence,
orthogonal  systems, GCD matrix, eigenvalues, quadratic forms}

\begin{abstract}
Given a periodic function $f$, we study the almost everywhere and
norm convergence of series  $\sum_{k=1}^\infty c_k f(kx)$. As the
classical theory shows, the behavior of such series is determined by
a combination of analytic and number theoretic factors, but precise
results exist only in a few special cases. In this paper we use
connections with orthogonal function theory and GCD sums to prove
several new results, improve old ones and also to simplify and unify
the theory.
\end{abstract}

\maketitle

\section{\bf Introduction and preliminary results}

\bigskip
Let $\T=\R/\Z\simeq[0,1[$, $e(x)=\exp(2i\pi x)$, $e_n(x)= e(nx)$,
$n\in {\Z}$. Let $ f(x)= \sum_{\ell\in {\Z}} a_\ell e_{\ell }   $,
$a_0=0$, $\sum_{\ell \in {\Z}} |a_\ell|^2<\infty$. The convergence
and asymptotic properties of sums
\begin{equation}\label{fsum}
\sum_{k=1}^\infty c_k f(kx)
\end{equation}
have been studied extensively in the literature and turned out to
be, in general, quite different from the trigonometric case
$f(x)=e(x)$. (For history and recent results,  see e.g.\ Berkes and
Weber \cite{BW}.) For $f(x)=e(x)$ (and consequently, if $f$ is a
trigonometric polynomial), Carleson's theorem states the almost
everywhere convergence of (\ref{fsum}) provided ${\bf c}=\{c_k, k\ge
0\}\in \ell^2(\N)$. Using a simple approximation argument, Gaposhkin
\cite{G} showed that this remains valid if the Fourier series of $f$
is absolutely convergent, i.e.\ if $\sum_{\ell\in {\mathbb Z}}
|a_\ell|<\infty$. In particular, this is the case if $f$ belongs to
the $\text{Lip}_\alpha(\T)$ class for some $\alpha>1/2$. For
$\sum_{\ell\in {\mathbb Z}} |a_\ell|=\infty$ the convergence
properties of the sum (\ref{fsum}) are much more complicated and
satisfactory results exist  only in special situations. If the
Fourier series of $f$ is lacunary, i.e.\ if
$f(x)=\sum_{\ell=1}^\infty a_\ell e(n_\ell x)$, where
$n_{\ell+1}/n_\ell \ge q>1$, $\ell=1, 2, \ldots$, then by a theorem
of \cite{G}, the analogue of Carleson's theorem holds for
(\ref{fsum}) provided the $L_2$ modulus of continuity $\omega_2 (f,
h)$ of $f$ satisfies
$$\sum_{k=1}^\infty \frac{\omega_2 (f, 2^{-k})}{\sqrt{k}}<\infty$$
and this result is sharp. For general $f$, this criterion becomes
false:  if the Fourier series $f=\sum_p a_pe(px)$ ($\sum_p
|a_p|=\infty$) contains only prime frequencies, then the analogue of
Carleson's theorem is  false, even though this class contains
Lip\,(1/2) functions $f$, see Berkes \cite{Be}. It is also known
that the asymptotic distribution of sums $\sum_{k=1}^N c_k f(n_kx)$
depends sensitively on the Diophantine properties of the sequence
$(n_k)$, see e.g.\ Gaposhkin \cite{g70}. These results, together
with Wintner's classical criterion \cite{Wi} connecting the behavior
of (\ref{fsum}) with boundedness properties of the Dirichet series
$\sum_{n=1}^\infty a_n/n^s$,  show that the convergence properties
of (\ref{fsum}) are determined by an interplay of analytic and
number theoretic factors. Recent results of Weber \cite{W1} and
Br\'emont \cite{Br} shed a  new light on the arithmetic background of
the problem and led to much improved convergence results. Weber
showed that assuming a condition for $f$ only slightly stronger than
$f\in L^2$, the series (\ref{fsum}) converges a.e.\ provided
\begin{equation*}
\sum_{r=1}^\infty \bigg(\sum_{j=2^r+1}^{2^{r+1}} c_j^2 d(j) (\log
j)^2\bigg)^{1/2}<\infty,
\end{equation*}
where $d(k)=\sum _{d|k} 1$ is the divisor function. Br\'emont  showed
that under $|a_n|= O(n^{-s})$, $1/2<s\le 1$, a.e.\ convergence holds
if
that  the series $\sum_{k }  c_k f_k$ converges almost everywhere if for some $\e>0$
\begin{equation}\label{conds0}   \sum_{k }    c_k^2   \exp\Big\{\frac{(1+\e)(\log k)^{2(1-s)}}{2(1-s)\log\log k}\Big\} <\infty,
  \end{equation}
when $1/2<s<1$, and if for some $\e>0$
   \begin{equation}\label{conds1}   \sum_{k }    c_k^2( \log k)^3(\log\log k)^{ 2+\e}  <\infty,
  \end{equation}
when $s=1$. 
\vskip 2pt
 The purpose of the present
paper to give a detailed  study of the series (\ref{fsum}), using
connections with orthogonal function theory and asymptotic estimates
for GCD sums in Diophantine approximation theory. This will not only
lead to an extension and improvement of  earlier results, but will
also simplify and unify the convergence theory.

\medskip
The convergence behavior of (\ref{fsum}) is naturally closely
connected with estimating the quadratic form
\begin{equation}\label{norm}
\big\|\sum_{k=1}^n c_k f_k\big\|_2^2= \sum_{k,\ell =1}^n c_kc_\ell
\langle f_k, f_\ell\rangle .
\end{equation}
 Let us first study
it   on a simple class of  examples. We follow \cite{LS}. Consider the function
\begin{equation}\label{funct} f(x)= \sum_{j=1}^\infty \frac{\sin 2\pi jx}{j^s},
\end{equation}
where $s>1/2$. When $s=1$,  $f(x)= x-[x] -1/2$, where  $[x]$ is the
integer part of $x$. It is known (see \cite{J}) that the
corresponding system $\{f_n, n\ge 1\}$ possesses properties going at
the opposite of those of the trigonometrical system.
 \vskip2 pt
 A simple calculation yields
\begin{eqnarray} \label{sp}\langle f_k, f_\ell\rangle= \sum_{i,j=1\atop ik =j\ell }^\infty \frac{1}{i^sj
^s}=\Big(\sum_{\nu=1}^\infty\frac{1}{\nu^{2s} }\Big)\frac{(k,\ell)^{2s}}{k^s\ell^s}=\zeta(2s)\frac{(k,\ell)^{2s}}{k^s\ell^s},
\end{eqnarray}
where $\zeta$ is  Riemann's zeta function, and $(a,b)$ denotes the
greatest common divisor of the positive integers $a$ and $b$. It
follows that
\begin{equation*} 
\big\|\sum_{k=1}^n c_k f_k\big\|_2^2=\zeta(2s)\sum_{k,\ell=1}^n \frac{(k,\ell)^{2s}}{k^s\ell^s}c_k   c_\ell.
\end{equation*}
Subsequently, the GCD matrix
$$M_n(s)=\Big(\frac{(k,\ell)^{2s}}{k^s\ell^s}\Big)_{n\times n} $$
is positive definite when $s>1/2$. The study of this important class of matrices is much  older, and goes back to Smith's seminal
paper published in 1876 (see \cite{HSW} and references therein). Let
$\l_n(s)$ (resp.
$\Lambda_n(s)$) denote the smallest (resp. largest) eigenvalue of the matrix $M_n(s)$.
 We have the sharp estimate (\cite{LS}, p.\ 152), the constants being optimal,
\begin{equation} \label{HS2} \frac{\zeta(2s)}{\zeta(s)^2}\le \l_n(s)\le \Lambda_n(s)\le \frac{\zeta( s)^2}{\zeta(2s)}  ,
\end{equation}
when $s>1$.
Consequently,
 \begin{equation}\label{boundquadra0}  \frac{\zeta(2s)}{\zeta(s)^2}\sum_{k=1}^n c_k^2\le \big\|\sum_{k=1}^n c_k f_k\big\|_2^2\le
\frac{\zeta( s)^2}{\zeta(2s)}\sum_{k=1}^n c_k^2  ,
 \end{equation}
when $s>1$. This implies that the series $\sum_k c_k f_k$ converges
in mean  if $\sum_k c_k^2<\infty$. In fact, it follows from
Gaposhkin's theorem cited above (see also \cite{Br}, p.\ 826) that
this series converges almost everywhere. Concerning eigenvalues,
Wintner (\cite{Wi}, p.\ 578) has shown that $\limsup_{n\to
\infty}\Lambda_n(s)<\infty$ if and only if $s>1$. Further, when
$1/2<s\le 1$,  it is known  that (\cite{LS}, p.\ 152)
\begin{equation} \label{HS3} \liminf_{n\to \infty}\l_n(s)=0, \qq  \limsup_{n\to \infty}\Lambda_n(s)=\infty.
\end{equation}

Our  first observation is that the  quadratic form (\ref{norm}) can
be for  $s>0$ more conveniently reformulated.
\begin{lemma} \label{02}Let $S=\{ x_1, \ldots, x_n\}$ be a set of distinct positive integers.
We assume that $S$ is     factor closed ($d|x_i\Rightarrow  d=x_j$
for some $j=1,\ldots, n$).   Let $s>0$. Then for  all   real  $y_1,
\ldots, y_n$,
$$\sum_{k,\ell=1}^ny_ky_\ell \frac{(x_k, x_\ell)^{2s}}{x_k^s x_\ell^s}= \sum_{i=1}^n J_{2s}
(x_i) \bigg\{ \sum_{k=1}^n {\bf 1}_{x_i|x_k} \frac{y_k}{x_k^s} \bigg\}^2,  $$
 where $J_\e =\xi_\e *\m    $ is the generalized Jordan totient
 function, $\xi_\e (k)= k^\e $ for all $k\in \Z$,   $\m $ being the M\"obius function.
    \end{lemma}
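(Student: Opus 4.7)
The plan is to expand $(x_k,x_\ell)^{2s}$ by Möbius inversion in terms of the generalized Jordan totient, use the factor-closedness of $S$ to replace the sum over divisors by a sum indexed by the elements of $S$ themselves, and then swap the order of summation to recognize a square.

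Concretely, since $J_{2s} = \xi_{2s} * \mu$, Möbius inversion gives the identity
\begin{equation*}
m^{2s} \;=\; \sum_{d\mid m} J_{2s}(d)
\end{equation*}
for every positive integer $m$. Applying this with $m=(x_k,x_\ell)$ and noting that the common divisors of $x_k$ and $x_\ell$ are exactly those positive integers which divide both, I get
\begin{equation*}
(x_k,x_\ell)^{2s} \;=\; \sum_{d\mid x_k,\, d\mid x_\ell} J_{2s}(d).
\end{equation*}

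Here the factor-closedness hypothesis enters: every divisor $d$ of any $x_k\in S$ is itself some $x_i\in S$. Hence the last sum may be rewritten as a sum over $i=1,\ldots,n$ weighted by the indicators $\mathbf 1_{x_i\mid x_k}\mathbf 1_{x_i\mid x_\ell}$, namely
\begin{equation*}
(x_k,x_\ell)^{2s} \;=\; \sum_{i=1}^n J_{2s}(x_i)\,\mathbf 1_{x_i\mid x_k}\mathbf 1_{x_i\mid x_\ell}.
\end{equation*}

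Substituting this into the left-hand side of the claim, dividing by $x_k^s x_\ell^s$, and interchanging the finite summations yields
\begin{equation*}
\sum_{k,\ell=1}^n y_k y_\ell\,\frac{(x_k,x_\ell)^{2s}}{x_k^s x_\ell^s}
\;=\; \sum_{i=1}^n J_{2s}(x_i)\sum_{k,\ell=1}^n \mathbf 1_{x_i\mid x_k}\mathbf 1_{x_i\mid x_\ell}\,\frac{y_k y_\ell}{x_k^s x_\ell^s},
\end{equation*}
and the inner double sum factors as the square of $\sum_{k=1}^n\mathbf 1_{x_i\mid x_k}\,y_k/x_k^s$, which is the desired formula.

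There is essentially no analytical obstacle here; the only delicate point is the use of the factor-closed hypothesis, which is precisely what allows the divisor-indexed sum to be reindexed by elements of $S$. Without it, the identity acquires additional terms coming from divisors not in $S$, and the clean sum-of-squares representation is lost.
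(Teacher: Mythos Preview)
Your proof is correct and follows essentially the same approach as the paper: the paper's proof is the single line ``Immediate since $(x_k, x_\ell)^{2s}=\sum_{i=1}^n J_{2s}(x_i){\bf 1}_{x_i| x_k}{\bf 1}_{x_i| x_\ell}$,'' and you have simply supplied the details behind that identity (M\"obius inversion plus factor-closedness) and the subsequent interchange of sums.
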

 \begin{proof} Immediate since  $(x_k, x_\ell)^{2s}=\sum_{ i=1  }^n   J_{2s} (x_i){\bf 1}_{x_i| x_k   } {\bf 1}_{x_i|  x_\ell } $.
   \end{proof}

\begin{remark}      Let $G$   and $A$ be  $n\times n$ matrices, with entries respectively defined by $(G)_{i,j}=  (x_i, x_j)^{2s} $ and
   $(A)_{i,j}= \sqrt{J_{2s} (x_i)} {\bf 1}_{x_i|x_j} $.
  By Lemma 4.1 of \cite{HL}, $ G= {}^t A A$.
  It follows that ${}^tUGU= {}^tV V$ where $V= AU$, namely
  $$ (V)_i= \sum_{k=1}^n  (A)_{i,k} u_k= \sqrt{J_{2s} (x_i)} \sum_{k=1}^n    {\bf 1}_{x_i|x_k} u_k.$$
  This is  another  more constructive way to obtain Lemma \ref{02}.
  \end{remark}
  The set $[1,n]$ being factor closed, by   taking $y_1=\ldots =y_{m-1}=0$ in the above Lemma, we get   for any $s>0$ and
  all   real  $y_m, \ldots, y_n$,
  \begin{equation}\Big\|\sum_{k=m}^n y_k f_k\Big\|_2^2=\zeta(2s)\sum_{i=1}^n J_{2s} (i) \bigg\{ \sum_{k=m}^n {\bf 1}_{ i| k} \frac{y_k}{k^s} \bigg\}^2.
  \end{equation}

The theorem below is Br\'emont's recent result (\cite{Br}, Theorem
1.1 (ii)) with only a slightly better formulation.  Let $\sigma_s (k)=\sum_{d|k} d^s$. 
\begin{theorem} \label{th_bremont} Let $1/2<s\le1$. Let $\p(k)>0$ and non decreasing.
Assume that both series  
$$ \sum_{k }  \frac{1}{k  \p (k)} , \qq \qq   \sum_{k }    c_k^2  \p( k)  (\log k)^2 \s_{1-2s}(k) $$
are convergent.
Then  the series $\sum_{k }  c_k f_k$ converges almost everywhere.
\end{theorem}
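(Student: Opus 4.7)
The plan is to reduce the theorem to an $L^2$ estimate of the shape
$$
\Big\|\sum_{k=m}^n c_k f_k\Big\|_2^2 \;\le\; C \sum_{k=m}^n c_k^2\, \varphi(k)\,\sigma_{1-2s}(k),
$$
obtained by pairing Lemma \ref{02} with a carefully weighted Cauchy--Schwarz inequality; and then to feed this additive majorant into the classical Kaczmarz--Menshov--M\'oricz maximal inequality applied on dyadic blocks. The two hypotheses enter at distinct stages: $\sum 1/(k\varphi(k)) < \infty$ is consumed in the quadratic form bound, while the factor $(\log k)^2$ in the second series is exactly what the maximal inequality demands.

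By Lemma \ref{02} applied to the factor-closed set $[1,n]$,
$$
\Big\|\sum_{k=m}^n c_k f_k\Big\|_2^2 \;=\; \zeta(2s) \sum_{i=1}^n J_{2s}(i)\bigg(\sum_{k=m}^n \mathbf{1}_{i|k}\,\frac{c_k}{k^s}\bigg)^{\!2}.
$$
I would then apply Cauchy--Schwarz with the factorization $c_k/k^s = \big(\sqrt{\varphi(k)}\,c_k/k^{s-1/2}\big)\cdot\big(1/\sqrt{k\varphi(k)}\big)$, to get
$$
\bigg(\sum_{k=m}^n \mathbf{1}_{i|k}\,\frac{c_k}{k^s}\bigg)^{\!2} \;\le\; \bigg(\sum_{k:\,i|k} \frac{1}{k\varphi(k)}\bigg) \bigg(\sum_{k=m}^n \mathbf{1}_{i|k}\,\frac{\varphi(k)\,c_k^2}{k^{2s-1}}\bigg).
$$
Writing $k = im'$ and invoking the monotonicity of $\varphi$ (so $\varphi(im')\ge\varphi(m')$), the first factor on the right is at most $i^{-1}\sum_{m'\ge 1}1/(m'\varphi(m'))=:C/i$, finite by hypothesis. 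Swapping summations and using $J_{2s}(i)\le i^{2s}$ collapses the divisor sum via
$$
\sum_{i|k}\frac{J_{2s}(i)}{i} \;\le\; \sum_{i|k} i^{2s-1} \;=\; \sigma_{2s-1}(k) \;=\; k^{2s-1}\sigma_{1-2s}(k),
$$
yielding the announced majorant.

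The right-hand side is additive in the index range, hence trivially superadditive, so I would then invoke the M\'oricz maximal inequality on each dyadic block $B_r = (2^r,2^{r+1}]$:
$$
\Big\|\max_{2^r < n \le 2^{r+1}}\Big|\sum_{k=2^r+1}^{n} c_k f_k\Big|\Big\|_2^{2} \;\le\; K\,r^2 \sum_{k\in B_r} c_k^2\,\varphi(k)\,\sigma_{1-2s}(k).
$$
Since $r \asymp \log k$ throughout $B_r$, summing over $r$ and using the hypothesis $\sum_k c_k^2 \varphi(k)(\log k)^2 \sigma_{1-2s}(k) < \infty$ makes the total finite, so a standard Chebyshev--Borel--Cantelli argument shows that the oscillation inside each block tends to $0$ almost surely while the dyadic skeleton $\{S_{2^r}\}$ is $L^2$-Cauchy; this delivers almost-everywhere convergence of the full series.

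The main obstacle, and the only non-routine step, is the design of the weighted Cauchy--Schwarz. The weight must be split between $\varphi(k)$ and $1/\varphi(k)$ in exactly the right way so that (i) monotonicity of $\varphi$ converts $\sum_{k:\,i|k}1/(k\varphi(k))$ into a clean $C/i$, and (ii) the surviving factor $J_{2s}(i)/i$, after summation over $i|k$ and normalization by $k^{2s-1}$, reproduces precisely $\sigma_{1-2s}(k)$. Once this algebraic match is identified, the remainder of the argument is a mechanical application of standard $L^2$ maximal-inequality machinery, and both hypotheses appear in the estimate in their most economical form.
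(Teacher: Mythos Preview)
Your argument is correct, and it lands on the same key $L^2$ estimate as the paper (the inequality labeled (\ref{HS1})), but both halves of your proof are executed differently.

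For the quadratic-form bound, the paper expands the square in Lemma \ref{02}, substitutes $k\mapsto ik'$, $\ell\mapsto i\ell'$ to factor the double sum, applies Cauchy--Schwarz once in $i$ and then a second time in $k$ with weight $\psi(k)=k^{1-s}\varphi(k)$, and finally rearranges a divisor sum. Your single weighted Cauchy--Schwarz inside the inner sum, followed by the monotonicity step $\sum_{i\mid k}1/(k\varphi(k))\le C/i$ and the identity $\sum_{i\mid k}J_{2s}(i)/i\le\sigma_{2s-1}(k)=k^{2s-1}\sigma_{1-2s}(k)$, reaches the same bound in fewer moves. It is a cleaner derivation of exactly the same inequality.

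For the passage from the $L^2$ bound to a.e.\ convergence, the paper invokes Schur's extension lemma (Lemma \ref{schur}) to embed the renormalized system into a genuine orthonormal system and then applies Rademacher--Menshov. Your route via the M\'oricz maximal inequality on dyadic blocks is the more elementary alternative --- in fact the paper itself uses essentially your argument later, in the proof of Theorem \ref{cNc1}. The Schur route has the advantage of yielding the stronger conclusion noted in Remark \ref{more} (convergence for \emph{any} universal coefficient sequence, not only those covered by Rademacher--Menshov); your route avoids the extension lemma altogether. Both are standard and either suffices for the theorem as stated.
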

  By using Gronwall's estimates
(\cite{Gr} p.\ 119--122),
\begin{equation} \label{gron}\limsup_{x\to \infty}\frac{\s_1(x)}{x\log\log x}= e^\l, \qq  \limsup_{x\to \infty} \frac{\log
\big(\frac{\s_\a(x)}{x^\a}\big)}{\frac{(\log x)^{1-\a}}{\log\log x}} =\frac{1}{1-\a}, \qq (0<\a<1)
\end{equation}
where $\l$ is Euler's constant, and   the fact that  $ \s_{-\a} (x)= x^{-\a}
\s_\a(x)$, 
we easily recover (\ref{conds0}) and (\ref{conds1}). 
As we shall see later, the condition in Theorem
\ref{th_bremont} can still be weakened. \vskip 1pt

Br\'emont's proof is based on M\"obius orthogonalization and an
adaptation of Rademacher-Menshov's theorem. Schur's theorem and the
previous lemma allow to get it   shortly.
\begin{proof} Let $n\ge m\ge 1$. In the following calculation concerning
the norm $\big\|\sum_{k=m}^n c_k f_k\big\|_2$,  we may let  $c_k=0$
if $k\notin [m,n]$. Then
\begin{eqnarray*} 
\zeta(2s)^{-1}\big\|\sum_{k=m}^n c_k f_k\big\|_2^2&=&
   \sum_{i=1}^n J_{2s} (i) \bigg\{ \sum_{k=m}^n {\bf 1}_{ i| k} \frac{c_k}{k^s} \bigg\}^2.
\cr
 &\le &   \sum_{k,\ell=1}^n \frac{1}{k^s\ell^s}\bigg\{ \sum_{i=1}^n \frac{J_{2s} (i) }{i^{2s}}|c_{k i}||c_{\ell i} |   \bigg\}
\cr
&\le &   \sum_{k,\ell=1}^n \frac{1}{k^s\ell^s}\Big( \sum_{i=1}^n \frac{J_{2s} (i) }{i^{2s}}c_{k i}^2     \Big)^{1/2} \Big( \sum_{i=1}^n \frac{J_{2s} (i) }{i^{2s}} c_{\ell i}^2    \Big)^{1/2} \cr
&= & \bigg\{  \sum_{k =m}^n \frac{1}{k^s }
  \Big(\sum_{i=1}^n \frac{J_{2s} (i) }{i^{2s}}c_{k i}^2     \Big)^{1/2}     \bigg\}^2
\cr
&\le  & \bigg\{  \sum_{k =1}^n \frac{1}{k^s \psi(k)}   \bigg\} \bigg\{ \sum_{k =m}^n \frac{\psi(k)}{k^s  } \Big(\sum_{i=1}^n \frac{J_{2s} (i) }{i^{2s}}c_{k i}^2     \Big)     \bigg\}
   .
 \end{eqnarray*}
Now, choosing $\psi(k) = k^{1-s} \p(k)$, we have
\begin{eqnarray*}\sum_{k =m}^n  \frac{\psi(k)}{k^s  } \sum_{i=1}^n \frac{J_{2s} (i) }{i^{2s}}c_{k i}^2
&=& \sum_{1\le \nu \le n^2}c_\nu^2 \sum_{k|\nu\atop m\le k\le n} \frac{\psi(\nu/k)}{(\nu/k)^s  } \frac{J_{2s} (i) }{i^{2s}}
\cr
&\le & \sum_{m\le \nu \le n }c_\nu^2   \sum_{\k|\nu }  \p (\k)  \k ^{1-2s}
\le  \sum_{m\le \nu \le n }c_\nu^2  \p( \nu) \s_{1-2s}(\nu) .
\end{eqnarray*}
  By combining, we find for all $n\ge m\ge 1$,
   \begin{eqnarray} \label{HS1}  \big\|\sum_{k=m}^n c_k f_k\big\|_2^2
&\le  &  \zeta(2s)\bigg\{  \sum_{k =1}^n \frac{1}{k  \p (k)}   \bigg\} \bigg\{ \sum_{k =m}^n c_k^2  \p( k) \s_{1-2s}(k)     \bigg\}  ,
 \end{eqnarray}

Assume that the series $\sum_{k }  \frac{1}{k  \p (k)} $ converges. Let $\tilde f_k=   f_k/( C_{s,\p}\p( k) \s_{1-2s}(k))^{1/2}$ where $C_{s,\p}=\zeta(2s)   \sum_{k \ge 1}  \frac{1}{k  \p (k)} $. Then
 $ \big\|\sum_{k=m}^n c_k \tilde f_k\big\|_2^2
 \le     \sum_{k =m}^n c_k^2 $.
 In view of Schur's Lemma  (Lemma \ref {schur}), this implies that these functions   can be extended   to an
orthonormal system on a   bounded interval $X$ of the real line including $[0,1[$,  and  endowed with the normalized Lebesgue measure.  
 By  Rademacher-Menshov's theorem, the series $\sum_{k }  c_k \tilde f_k$ converges almost everywhere  once the series $ \sum_{k }    c_k^2  (\log k)^2$
converges. Equivalently,  the series $\sum_{k } \g_k  f_k$ converges almost everywhere  once the series $  \sum_{k }    \g_k^2  \p( k)  (\log
k)^2\s_{1-2s}(k) $  converges, as claimed.
  \end{proof}
  \begin{remark}     The monotonicity of the $L^2$ norm with respect to Fourier coefficients yields that Theorem \ref{th_bremont}
  remains valid for $f(x)= \sum_{j=1}^\infty a_j \sin 2\pi jx $, assuming that $|a_j|=\mathcal O(j^{-s})$, $1/2<s\le 1$.
    \end{remark}
  \begin{remark} \label{more} Schur's Lemma implies much more. The series $\sum_{k }  c_k f_k$ converges almost everywhere for any
 coefficient sequence $\{c_k, k\ge 1\}$ such that 
\begin{eqnarray}  \label{cconv1}
 \big\{c_k  \big(\p( k)   \s_{1-2s}(k)    \big)^{1/2}  , k\ge 1\big\}
\end{eqnarray} is   universal, according to Definition \ref{universal}. 
 
\end{remark}    \begin{remark} \label{gal}  Estimate (\ref{HS1}) indicates that 
\begin{eqnarray*}   
 \Big|\sum_{k,\ell =m}^n c_kc_\ell   \frac{(k,\ell)^{2s}}{k^s\ell^s}\Big|
&\le  &   C(\p)    \sum_{k =m}^n c_k^2  \p( k) \s_{1-2s}(k)     , \end{eqnarray*}     
 assuming $C(\p)=\sum_{k =1}^\infty \frac{1}{k  \p (k)} <\infty $. Take $s=1$ and  let 
 $ 1\le \k_1<\k_2<\ldots <\k_r   $
be integers. Choose $m=\k_1$, $n=\k_r$ and
  $c_k=1$, if $k=\k_j$ for   some $1\le j\le r$, and 
$c_k=0$ otherwise.
 Letting  also $\p(x)= (\log x) (\log\log x)^{1+\e}$, $\e>0$, and using   (\ref{gron}), we find that
 $$   
 \sum_{ i,j=1}^r  \frac{(\k_i,\k_j)^{2 }}{\k_i \k_j } 
 \le     C_\e   \sum_{i =1}^r   ( \log  \k_i) (\log\log k_i)^{2+\e} . $$
 However, this is   far from being optimal. G\'al's estimate (\cite{G}, Theorem 2) indeed implies
\begin{equation}\label{boundquadras}   \sum_{ i,j=1}^r  \frac{(\k_i,\k_j)^{2 }}{\k_i \k_j }  \le
Cr(\log\log r)^2 .
 \end{equation} 
  \end{remark}  

\vskip 3pt

Before going further,  recall that  by Wintner's fundamental theorem, the
series $\sum_{n=1}^\infty c_n f(nx)$ converges in the mean for all
${\bf c}\in \ell^2$ iff
\begin{equation}\label{wintner} \sum_{n=1}^\infty a_n /n^s \quad \hbox{and} \quad \sum_{n=1}^\infty
b_n /n^s\quad \hbox{are regular and  bounded for\ } {\Re}  s>0.
\end{equation}
 The following result (\cite{BW}, Theorem 3.1) concerns the situation when condition  (\ref{wintner}) fails.
\begin{theorem}\label{bw.div} Let $f\in \rm{Lip}_\alpha ({\bf  T})$,
$0<\alpha\le 1$, $\int_{\bf  T} f(t)dt=0$ and assume that (\ref{wintner}) is
not valid. Then for any $\varepsilon_k \downarrow 0$ there exists
${\bf c}\in \ell^2$ and a sequence $(n_k)$ of
positive integers satisfying
$$ n_{k+1}/n_k \geq 1+\varepsilon_k \qquad (k \geq k_0 ) $$
such that the series $\sum_k c_k f(n_k x)$ is a.e.\  divergent.
\end{theorem}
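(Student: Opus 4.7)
The plan is to reformulate the hypothesis operator-theoretically and then build the counterexample blockwise. By Wintner's theorem, the failure of (\ref{wintner}) is equivalent to the unboundedness of the Gram operator $G=(\langle f_k,f_\ell\rangle)_{k,\ell\ge 1}$ on $\ell^2(\N)$. Since $\|\sum_{k<M}c_kf_k\|_2\le \sqrt{M-1}\,\|f\|_2\|c\|_2$ for every $c\in\ell^2$, this unboundedness persists on vectors supported in $[M,\infty)$; by truncation, for every $M\in\N$ and every $\Lambda>0$ there exists a finitely supported $c$ with $\mathrm{supp}(c)\subset[M,N]$ (for some $N=N(M,\Lambda)$), $\|c\|_2=1$, and $\bigl\|\sum_k c_k f_k\bigr\|_2\ge \Lambda$.

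Armed with this, I construct inductively intervals $[A_m,B_m]\subset\N$ and unit vectors $c^{(m)}$ with $\mathrm{supp}(c^{(m)})\subset[A_m,B_m]$, $\bigl\|\sum c_k^{(m)}f_k\bigr\|_2\ge 2^m$, and $A_{m+1}\gg B_m$. The sequence $(n_k)$ is then defined by concatenating the integers of each $[A_m,B_m]$, with auxiliary filler integers inserted in the gaps $(B_m,A_{m+1})$, chosen so that the global sparsity $n_{k+1}/n_k\ge 1+\varepsilon_k$ holds throughout. Within $[A_m,B_m]$ the consecutive ratios are $\ge 1+1/B_m$, so I postpone block $m$ until its starting global index $k_m$ satisfies $\varepsilon_{k_m}\le 1/B_m$; this is possible because $\varepsilon_k\downarrow 0$. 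Finally set $c_k=\alpha_m c^{(m)}_{n_k}$ for $n_k\in[A_m,B_m]$ and $c_k=0$ on fillers, with $\alpha_m>0$ satisfying $\sum \alpha_m^2<\infty$ while $\alpha_m 2^m\to\infty$ (e.g.\ $\alpha_m=1/m$). Then $\{c_k\}\in\ell^2$, and the block contribution $T_m:=\alpha_m\sum_{j\in[A_m,B_m]}c^{(m)}_j f_j$ satisfies $\|T_m\|_2\ge \alpha_m 2^m\to\infty$, so the partial sums fail to converge in $L^2$.

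The last step upgrades $L^2$-divergence to a.e.\ divergence by exploiting quasi-orthogonality of the blocks: the huge gaps $A_{m+1}/B_m$, together with the decay $|a_n|=O(n^{-\alpha})$ supplied by $f\in\mathrm{Lip}_\alpha$, force $|\langle T_m,T_{m'}\rangle|$ to decay rapidly in $|m-m'|$. A maximal inequality of Menshov--Rademacher type for quasi-orthogonal systems (or, equivalently, a blockwise second-moment / Paley--Zygmund argument) then converts the $L^2$-divergence of $\sum_m T_m$ into almost-everywhere divergence of $\sum_k c_k f(n_k x)$. \textbf{The main obstacle} is the quantitative alignment in the middle step: $[A_m,B_m]$ must be rich enough to carry a unit vector with norm $\ge 2^m$ (drawing on the tail-unboundedness of $G$), yet $B_m$ cannot be so large that the demand $\varepsilon_{k_m}\le 1/B_m$ fails at the stage-$m$ starting position $k_m$. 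This is handled by iterating conservatively---at stage $m$, first fix $A_m$ far beyond $B_{m-1}$, then find the smallest $B_m$ realizing the $2^m$ lower bound, and finally insert enough fillers between $B_{m-1}$ and $A_m$ so that the sparsity holds at every intermediate global index.
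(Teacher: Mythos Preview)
This theorem is quoted from \cite{BW} without proof, so there is nothing in the present paper to compare against; on its own merits your sketch has two real gaps. First, the alignment you flag as the main obstacle is not actually resolved. You pick $A_m$, determine $B_m$, then try to insert enough fillers in $(B_{m-1},A_m)$ so that the starting index $k_m$ satisfies $\e_{k_m}\le 1/B_m$. But if $\e_k=1/\log k$, say, this forces $k_m\ge e^{B_m}\ge e^{A_m}$, while the number of fillers that can be packed into $(B_{m-1},A_m)$ under the gap condition grows only like $\log(A_m/B_{m-1})$ times a factor depending on the current index. Enlarging $A_m$ to make room also enlarges $B_m$, and nothing in your argument bounds $B_m$ in terms of $A_m$, so the procedure need not terminate. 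The remedy is the dilation identity $\|\sum_j c_j f_{dj}\|_2=\|\sum_j c_j f_j\|_2$ (change of variable $x\mapsto dx$ on $\T$): pick a unit vector on $\{1,\dots,N_m\}$ realizing norm $\ge 2^m$ \emph{first}, so the block size $N_m$ is fixed before its location; then insert fillers until $\e_{k_m}\le 1/N_m$ and place the block at $\{d_m,2d_m,\dots,N_md_m\}$ with $d_m$ beyond the last filler.

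Second, and more seriously, $\|T_m\|_2\to\infty$ does not yield a.e.\ divergence by the tools you name. Paley--Zygmund requires $\|T_m\|_4\le C\|T_m\|_2$; from $\|T_m\|_\infty\le\a_m\|f\|_\infty\sqrt{N_m}$ and $\|T_m\|_2\ge\a_m 2^m$ this would need $N_m=O(4^m)$, but the trivial bound $\Lambda_N\le N\|f\|_2^2$ on the top eigenvalue gives only the \emph{lower} bound $N_m\ge 4^m/\|f\|_2^2$, and if $\Lambda_N$ grows slowly (e.g.\ like a power of $\log N$) then $\sqrt{N_m}/2^m\to\infty$. Even granting Paley--Zygmund, it yields only $\P(|T_m|>c)\ge\d>0$, hence divergence on a set of measure $\ge\d$, not a.e., and no 0--1 law is available for such series. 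A Menshov--Rademacher maximal inequality bounds suprema of partial sums and is a tool for \emph{convergence}, not divergence. This step needs a genuinely different argument.
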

\begin{remark}  If $(n_k)$ grows exponentially (i.e.\
$n_{k+1}/n_k \ge q >1$), then $\sum_{k=1}^\infty c_k f(n_kx)$
converges a.e.\ for any ${\bf c}\in \ell^2$ by Kac's theorem \cite{Ka}. Thus Theorem \ref{bw.div} is sharp. It also remains true with minor modifications
in the proof, if instead of $f\in \rm{Lip}_\alpha ({\bf  T})$ we
assume only $f\in L^2({\bf  T})$. For the class of functions defined in
(\ref{funct}), Br\'emont has recently showed a similar result in \cite{Br}.
\end{remark}

      \vskip 7pt  In   the general case  the following   quadratic form
      appears:
$$\sum_{k, \ell\in
K } c_k c_\ell \frac{  ( k,  \ell)}{  \ell\vee  k}.
$$
Since $\frac{  ( k,  \ell)}{  \ell\vee  k}\le \frac{  ( k,  \ell)}{
\sqrt{k\ell  }  }$, this
 may be regarded as a continuation of  the limiting case $s=1/2$. Recall some basic facts concerning quadratic forms. Let   $U_n$ denote the unit sphere of
$\R^n$ and let $A=\{a_{i,j}\}_{i,j=1}^n$ be an
$n\times n$ real  symmetric matrix
  with characteristic roots
$\l_1,\ldots, \l_n$.   It is well-known that the set of values
assumed by the quadratic form $Q({\bf x})=\sum_{i,j=1}^n
x_ix_ja_{i,j}$ when ${\bf x}=(x_1,\ldots, x_n)\in U_n$, coincides
with the set of values assumed by $\sum_{i=1}^n \l_iy_i^2$ on $U_n$.
See \cite{B}, p.\ 39 and Chapter 4.

 Hence we get
 \begin{equation}   \big( \inf_ {i=1}^n \l _i\big)\sum_ {i=1}^n x^2_i\le
\big|Q({\bf x})\big|\le  \big( \sup_ {i=1}^n \l_i\big)\sum_ {i=1}^n x^2_i  . \end{equation}
 This   way to estimate $Q({\bf x})$ strongly relies  on a good knewledge of the extremal eigenvalues. The classical weighted
estimate     below is often more convenient.
\begin{lemma} \label{quadraf}   For any system of complex numbers $\{x_i\}$ and  $ \{\a_{i,j}\}$,
 $$   \Big|  \sum_{ 1\le i,j\le n\atop i\not= j}  x_ix_j\a_{i,j} \Big|\le  \frac1{2}\sum_{ i=1}^n x_i^2\Big(  \sum_{\ell
=1\atop
\ell\not = i }^n (|\a_{i,\ell} | +|\a_{
\ell ,i} |) \Big)  .
$$
   \end{lemma}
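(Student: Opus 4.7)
The plan is to estimate $|\sum_{i\ne j} x_i x_j \alpha_{i,j}|$ by the triangle inequality and then symmetrize via the AM-GM inequality. Concretely, I would first pass to absolute values:
\[
\Big|\sum_{i\ne j} x_i x_j \alpha_{i,j}\Big| \le \sum_{i\ne j}|x_i||x_j||\alpha_{i,j}|,
\]
and then apply $|x_i||x_j|\le \tfrac12(|x_i|^2+|x_j|^2)$ termwise. This produces two sums that are ``mirror images'' of each other with respect to the roles of the two summation indices.

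Next I would split the resulting bound as
\[
\tfrac12\sum_{i\ne j}|x_i|^2|\alpha_{i,j}| \;+\; \tfrac12\sum_{i\ne j}|x_j|^2|\alpha_{i,j}|,
\]
and reorganize each piece by grouping terms according to the index carrying $|x|^2$. In the first sum, fixing $i$ and summing over $\ell:=j\ne i$ yields $\sum_{\ell\ne i}|\alpha_{i,\ell}|$; in the second sum, renaming $j$ to $i$ and $i$ to $\ell$ yields $\sum_{\ell\ne i}|\alpha_{\ell,i}|$. Combining, one obtains exactly
\[
\tfrac12\sum_{i=1}^n |x_i|^2\Big(\sum_{\ell\ne i}(|\alpha_{i,\ell}|+|\alpha_{\ell,i}|)\Big),
\]
which is the claimed estimate (interpreting $x_i^2$ as $|x_i|^2$ in the complex case, since otherwise the right-hand side would not even be real).

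There is no real obstacle here: the lemma is a purely algebraic weighted-matrix inequality, and the entire content is the AM-GM step $|x_ix_j|\le\tfrac12(|x_i|^2+|x_j|^2)$ combined with a bookkeeping argument exploiting the symmetry between the two indices. The only thing worth flagging is to ensure, when relabeling the dummy variables in the second sum, that one really recovers the $|\alpha_{\ell,i}|$ contribution with the correct orientation, which is why the final bound carries \emph{both} $|\alpha_{i,\ell}|$ and $|\alpha_{\ell,i}|$ — a useful feature when $\{\alpha_{i,j}\}$ is not symmetric.
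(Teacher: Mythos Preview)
Your proof is correct and is essentially the same as the paper's: both use the triangle inequality followed by the AM--GM step $|x_ix_j|\le\tfrac12(|x_i|^2+|x_j|^2)$ and then regroup by the index carrying the square. The only cosmetic difference is that the paper first splits the sum into the two triangles $i<j$ and $j<i$ before applying AM--GM, whereas you treat the full off-diagonal sum at once; the bookkeeping and final bound are identical.
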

 \begin{proof} We have
$$\Big| \sum_{1\le i<j\le n}  x_ix_j\a_{i,j}\Big|\le   \sum_{1\le i<j\le n}  \big(\frac{x_i^2+x_j^2}{2} \big)|\a_{i,j} |\le \frac1{2}
\sum_{i=1}^n x_i^2 \Big(\sum_{i<\ell\le n } |\a_{i,\ell}
|+\sum_{1\le  \ell <i }  |\a_{ \ell ,i} |\Big)  .$$ Operating
similarly for the sum $\sum_{1\le j<i\le n}  x_ix_j\a_{i,j}$ gives
the  result.
  \end{proof}

 This suggests to   attach to
$K$ a function
$\t_K $ defined   by
\begin{equation}\label{cac}\t_K(k)=\sum_{\ell\in K\atop
\ell\not =k} \frac{ (\ell,k)}{  \ell\vee  k} .
\end{equation}
 The associated
coefficient
 $$\t_K =\sup_{k\in K }\t_K(k)$$
will serve as a  measure of   the arithmetical complexity of $K$. For instance, $\t_K$ is small if $K$ is a set of prime numbers,
and uniformly
bounded over all subsets
$K$ of a given chain, as we shall see. A sequence $\mathcal N=\{n_k, k\ge 1\}$ is a called a chain if $n_k|n_{k+1}$ for all $k$.
    There are    examples for which $\t_K(k)=o(1)$,
if  $k$ is  large.
\vskip 4pt \noi{\it Example 1.} {\gsec  (Prime sequence)}
  Take
$K=\mathcal P \cap [N/2, N]$    where  $\mathcal P$ denotes the sequence of consecutive primes. Let $\pi(n)=
\sharp\{  p\, {\rm prime} , p\le n\}$ be the prime numbers function.  Then
$$\sum_{N/2\le \ell <k\atop \ell \in \mathcal P}\frac{(\ell, k)}{\ell \vee k}\le \frac{\pi(k) }{  k}\le \frac{C }{ \log  k},$$
and$$\sum_{k <\ell \le  N\atop \ell \in \mathcal P}\frac{(\ell, k)}{\ell }= \sum_{k <\ell \le  N\atop \ell \in \mathcal
P}\frac{1}{\ell }\le 2\frac{\pi(N) }{N}\le \frac{C }{
\log  N} \le \frac{C }{ \log  k}.$$  So that for all $k\in K$,
\begin{equation}\label{cac2}\t_{\mathcal P \cap [N/2, N]}(k) \le  {C }/{ \log  k}.
\end{equation}

It is easy to extrapolate from this  that $\t_K$ can be on examples as small as wished. There are also important classes of sequences for
which
$\t_K
$ is uniformly bounded over all of its finite parts
$K$.
\vskip 3pt\noi{\it Example 2.} {\gsec  (Hadamard gap  sequences)}
Consider  a  sequence $\mathcal N=\{n_k, k\ge 1\}$ satisfying the
Hadamard gap condition
\begin{equation} \label{hgc} \frac{n_{k+1}}{n_k}\ge q>1.
\end{equation}     Let  $K=\{ n_k, k\in \mathcal K\}$. Then
 \begin{equation} \label{hgc1}  \sup_{\mathcal K}\sup_{\ell \in \mathcal K} \t_{\mathcal K}(\ell)= \tau (q)<\infty.
\end{equation}
Indeed, for $\ell\in \mathcal K$,
$$ \sum_{k\in \mathcal K\atop
k< \ell}   \frac{  (n_k, n_\ell)}{ n_\ell\vee n_k } \le  \sum_{  k<\ell}  \frac{   n_k\wedge
n_\ell }{ n_\ell\vee n_k } =\sum_{  k<\ell} \frac{   n_k   }{ n_\ell  }\le \sum_{k<\ell }  q^{-(\ell-k)} \le C_q<\infty. $$
Similarly
 $\sum_{k\in \mathcal K\atop
k> \ell}   \frac{  (n_k, n_\ell)}{ n_\ell\vee n_k }\le  \sum_{
k> \ell}   \frac{   n_k\wedge n_\ell }{ n_\ell\vee n_k }=\sum_{  k>\ell} \frac{   n_\ell   }{ n_k  }\le \sum_{k>\ell }  q^{-(k-\ell )} \le
C_q<\infty$.  As $C_q$ depends on $q$ only. This yields (\ref{hgc1}).
  \vskip 3pt \noi{\it Example 3.}\,{\gsec(Squarefree numbers)}
Let   $\mathcal G$ be the set of squarefree numbers generated by
some increasing sequence $2\le p_1<p_2<\ldots$ of  prime integers
satisfying the following condition
\begin{equation}\label{sqc} \m= \sum_{i=1}^\infty \frac{1}{p_i}<1.\end{equation}
 Take $K\subset \mathcal G$. Writing in what follows $\ell= \l d$, $k=\k d$ with $d=(\ell,k)$, we easily get
\begin{equation}\label{ }\t_K(k)= \sum_{\ell\in K\atop
\ell\not =k} \frac{ (\ell,k)}{  \ell\vee  k}\le \frac{1}{  \k}\sum_{
\l  \le \k}  1+\sum_{\l\in \mathcal G\atop
\l>k} \frac{1}{  \l } \le C +\sum_{ p_i>k} \frac{1}{  p_i }+\sum_{ p_ip_j>k} \frac{1}{  p_ip_j } +\ldots,
\end{equation}
since the number of squarefree integers less than $x$ is of order $  6x/\pi^2$. Now
$$\sum_{ p_i>k} \frac{1}{  p_i }+\sum_{ p_ip_j>k} \frac{1}{  p_ip_j } +\ldots\le \m+\m^2 +\ldots<\infty.$$
Hence
$$\sup_{K\subset \mathcal G }\t_K<\infty.$$
\vskip 7pt A first basic mean estimate  obtained in this work (the
proof will be given in Section~\ref{ntt}) is the following.
   \begin{lemma} \label{basic0}     For any finite sequences of reals $\{a_j, j\in J\}$, $\{c_k, k\in K\}$,  $$ \big \|\sum_{ k \in
K}c_k\sum_{j\in J} a_je_{kj}\big\|_2^2
  \le   C |J|  \sup_{j\in J}  |a_j|^2    \sum_{k \in K } c_k ^2  \max(1,\t_{K }(k))  .
 $$
\end{lemma}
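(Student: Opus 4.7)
The plan is to expand the squared norm via orthonormality of the exponentials $e_n$, split into diagonal and off-diagonal contributions in $(k,k')\in K\times K$, parametrize the solutions of $kj=k'j'$ arithmetically, and then apply Lemma \ref{quadraf} to the resulting GCD form in order to introduce $\tau_K(k)$.

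\textbf{Setup.} Using $\langle e_{kj},e_{k'j'}\rangle=\mathbf 1_{kj=k'j'}$, one gets
$$
\Big\|\sum_{k\in K}c_k\sum_{j\in J}a_j e_{kj}\Big\|_2^2
=\sum_{k,k'\in K}c_kc_{k'}\sum_{j,j'\in J}a_j\overline{a_{j'}}\,\mathbf 1_{kj=k'j'}.
$$
The diagonal part $k=k'$ forces $j=j'$ and contributes at most $|J|\,\sup_j|a_j|^2\sum_k c_k^2$, which is already of the required form.

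\textbf{Arithmetic parametrization of the off-diagonal part.} For $k\neq k'$, write $d=(k,k')$, $\kappa=k/d$, $\kappa'=k'/d$ with $(\kappa,\kappa')=1$. Then $kj=k'j'$ is equivalent to $\kappa j=\kappa'j'$, i.e.\ $(j,j')=(\kappa't,\kappa t)$ for some integer $t\ge 1$. Hence the number $N(k,k')$ of pairs $(j,j')\in J\times J$ satisfying $kj=k'j'$ equals $\#\{t\ge 1:\kappa t,\kappa't\in J\}$. Using that $J$ is (essentially) an interval of integers, so that the number of multiples of any $m$ in $J$ is at most $|J|/m$, one obtains the crucial bound
$$
N(k,k')\ \le\ \frac{|J|}{\max(\kappa,\kappa')}\ =\ |J|\,\frac{(k,k')}{k\vee k'}.
$$

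\textbf{Assembling the estimate.} Bounding $|a_j\overline{a_{j'}}|\le\sup_j|a_j|^2$ on each parametrized pair and then summing,
$$
\Big|\sum_{k\neq k'}c_kc_{k'}\sum_{j,j':kj=k'j'}a_j\overline{a_{j'}}\Big|
\ \le\ |J|\,\sup_j|a_j|^2\sum_{k\neq k'}|c_kc_{k'}|\,\frac{(k,k')}{k\vee k'}.
$$
Now apply Lemma \ref{quadraf} to the symmetric coefficients $\alpha_{k,k'}=(k,k')/(k\vee k')$: this gives
$$
\sum_{k\neq k'}|c_kc_{k'}|\,\frac{(k,k')}{k\vee k'}\ \le\ \sum_{k\in K}c_k^2\,\tau_K(k).
$$
Adding the diagonal contribution yields
$$
\Big\|\sum_{k\in K}c_k\sum_{j\in J}a_j e_{kj}\Big\|_2^2
\le |J|\sup_j|a_j|^2\sum_{k\in K}c_k^2\bigl(1+\tau_K(k)\bigr)
\le 2|J|\sup_j|a_j|^2\sum_{k\in K}c_k^2\max(1,\tau_K(k)),
$$
which is the claim with $C=2$.

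\textbf{Main obstacle.} The only non-routine point is the counting bound $N(k,k')\le |J|(k,k')/(k\vee k')$: everything else is direct expansion plus an application of the already-stated Lemma \ref{quadraf}. This counting step uses the arithmetic fact that solutions of $kj=k'j'$ form an arithmetic progression indexed by $t$ with common spacing $\max(\kappa,\kappa')$, together with the mild structural assumption on $J$ ensuring that its intersection with an arithmetic progression of step $m$ has at most $|J|/m$ elements (absorbed into the constant $C$ if $J$ is merely a set of consecutive integers shifted by $1$).
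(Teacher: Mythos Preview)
Your proof is correct and follows essentially the same route as the paper: expand the squared norm, separate the diagonal contribution, parametrize the solutions of $kj=k'j'$ via the GCD to count them as at most $|J|\frac{(k,k')}{k\vee k'}$ (the paper phrases this as $(b-a)\frac{(k,\ell)}{\ell\vee k}$ after intersecting the two range constraints on the parameter $u$), and then apply Lemma~\ref{quadraf} to the resulting symmetric form. The only extra content in the paper's argument is that it records the two-sided estimate
\[
\Bigl|\,\bigl\|\textstyle\sum_{k}c_k\sum_{j}a_je_{kj}\bigr\|_2^2-\sum_{j\in J}a_j^2\sum_{k\in K}c_k^2\,\Bigr|\le (b-a)\sup_{j\in J}|a_j|^2\sum_{k\in K}c_k^2\max(1,\tau_K(k)),
\]
which it reuses later; your one-sided version is exactly what the lemma asks for.
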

 \vskip 7pt    A general bound for $\t_K(k)$ can be provided by using Pillai's arithmetical function $P(k)$, which is defined by $P(k)=
\sum_{d=1}^k (d,k)$. Recall that we have $P(k)= \sum_{d| k}d \phi(k/d)$, so that the arithmetic mean of $(1, k),\ldots,  (k, k)$ is given
by
  \begin{equation}\label{A} A(k)=\frac{P(k)}{k}=\sum_{\k|k}\frac{\phi(\k)}{\k}.
\end{equation}
   \begin{lemma}   \label{gb}
 For all finite sets $K$ of integers and all
$  k\in K
$,
\begin{equation}\label{cac1}\t_K(k)\le C\log (\frac{eK_+ }{k})  A(k) .
\end{equation}
where    $C$ is an absolute constant, and   $K_+$ (resp.
 $K_{-}$) denotes   the largest (resp. smallest) term of $K$.
\end{lemma}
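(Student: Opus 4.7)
The plan is to split $\t_K(k)$ according to whether $\ell<k$ or $\ell>k$, and to exploit the fact that the function $\ell\mapsto (\ell,k)$ is periodic of period $k$, with partial sum $P(k)=k A(k)$ over any full period.

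For the small part, I bound trivially
$$\sum_{\ell\in K,\ \ell<k}\frac{(\ell,k)}{k}\le \frac{1}{k}\sum_{\ell=1}^{k-1}(\ell,k)\le \frac{P(k)}{k}=A(k),$$
using Pillai's identity $P(k)=k A(k)$ recalled in (\ref{A}).

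For the large part, I decompose the range $(k,K_+]$ dyadically: set $I_j=(2^j k, 2^{j+1}k]$ for $j=0,1,\ldots,J$, where $J$ is the smallest integer with $2^{J+1}k\ge K_+$, so that $J\le\log_2(K_+/k)+O(1)$. For each $\ell\in I_j$, $\frac{(\ell,k)}{\ell}\le\frac{(\ell,k)}{2^j k}$, and since $(\cdot,k)$ is $k$-periodic, an interval of length $2^j k$ contains exactly $2^j$ full periods, giving $\sum_{\ell\in I_j}(\ell,k)\le 2^j P(k)=2^j k\, A(k)$. Hence
$$\sum_{\ell\in K\cap I_j}\frac{(\ell,k)}{\ell}\le \frac{1}{2^j k}\cdot 2^j k\, A(k)= A(k).$$
Summing over $j=0,1,\ldots,J$ produces at most $(J+1) A(k)\le C\log(eK_+/k)\,A(k)$.

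Combining the two pieces yields $\t_K(k)\le A(k)+C\log(eK_+/k)A(k)\le C'\log(eK_+/k)A(k)$, which is the claim. The only step requiring care is the dyadic bookkeeping of the $\ell>k$ contribution, but once one uses periodicity of $(\ell,k)\bmod k$ together with $P(k)=kA(k)$, the estimate in each block collapses to $A(k)$ and the total number of blocks is the source of the logarithmic factor.
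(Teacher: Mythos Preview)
Your argument is correct, but it differs from the paper's route. The paper derives Lemma~\ref{gb} from Lemma~\ref{basic01}, whose proof organises the sum $\sum_{\ell}\frac{(\ell,k)}{\ell\vee k}$ by the value of the gcd: writing $(\ell,k)=d$ for each $d\mid k$, one is led to count integers $\lambda$ in a range with $(\lambda,k/d)=1$, and this is handled via the van~Lint--Richert sieve bound (\ref{LRest}) together with partial summation. The factor $\sum_{\kappa\mid k}\phi(\kappa)/\kappa=A(k)$ then arises naturally from summing $\phi(k/d)/(k/d)$ over $d\mid k$.

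Your approach is more elementary: you bypass the sieve input entirely by exploiting only the $k$-periodicity of $\ell\mapsto(\ell,k)$ and the identity $\sum_{m=1}^{k}(m,k)=P(k)=kA(k)$. The dyadic blocks $(2^jk,2^{j+1}k]$ are unions of full periods, so each contributes exactly $A(k)$, and counting the blocks gives the logarithm. This is shorter and self-contained; the paper's divisor decomposition, on the other hand, is the same mechanism that drives the $s\neq 1$ estimates in Proposition~\ref{prop01}, so it fits more systematically with the rest of the argument.
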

This follows immediately from  Lemma \ref{basic01} below. Example 1
shows that    (\ref{cac1}) is   not always optimal. Estimate
(\ref{cac1}), however, implies that if $\mathcal M=\{m_k, k\ge 1\}$
is a sequence of mutually coprime integers,   then
$$\sup_N \sup_{K\subset [\rho N, N]}\t_K =
C_\rho <\infty.
$$

\begin{remark}    The main orders of $A(k)$ are well known. As  $C\frac{k}{\log\log k}\le \phi(k)\le k$, the function $A(k)$ always satisfies
$$\frac{d(k)}{\log\log k}\le A(k)\le d(k) ,$$
where   $d(k)$ denotes the number of divisors of
$k$. As to the maximal order,  we have Chidambaraswamy and Sitaramachandrarao estimate,
 \begin{equation} \label{pillai0}  \limsup_{n\to\infty} \frac{\log A(n)\log\log n}{\log n}= \log 2.
\end{equation}
This   is well-known for the function $d(n)$ instead of $A(n)$.  We   refer  to T\'oth's  recent survey   \cite{To} on  Pillai's
function.
\end{remark}  \vskip 4pt

 For the class of examples previously considered, we have the following  \begin{proposition}\label{prop01}  Let $K$ be a finite set of integers. For any $k\in K$,
\begin{eqnarray*} \sum_{\ell\in K\atop
\ell\not =k}  \frac{(k,\ell)^{2s}}{k^s\ell^s}\le \begin{cases}
2\big(\log \frac{K_+  } {K_- } \big)\s_{-1}(k)   &\quad {\rm if}\ s =1,
\cr C_s 2^sk ^{s-1}\big(\int^{K_+  }_{K_-  }\frac{\dd
u}{u^s}\big)
\s_{ 1-2s}(k)   &\quad {\rm if}\ s<1. \end{cases}
\end{eqnarray*}
 \end{proposition}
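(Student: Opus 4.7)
My plan is to decompose the sum according to the value $d = (k,\ell)$, which ranges over the divisors of $k$. Writing $\ell = dm$ with $(m, k/d) = 1$ one has $(k,\ell)^{2s}/(k^s \ell^s) = d^s/(k^s m^s)$, hence
\begin{equation*}
\sum_{\ell \in K,\ \ell \neq k} \frac{(k,\ell)^{2s}}{k^s \ell^s} = \sum_{d \mid k} \frac{d^s}{k^s} \sum_{\substack{m \geq 1 \\ dm \in K,\ m \neq k/d \\ (m, k/d) = 1}} \frac{1}{m^s}.
\end{equation*}
Since every term is non-negative, dropping the coprimality condition and replacing the membership ``$dm \in K$'' by the weaker ``$dm \in [K_-, K_+]$'' only enlarges the inner sum, which is therefore bounded by $\sum_{m = \lceil K_-/d \rceil}^{\lfloor K_+/d \rfloor} m^{-s}$.

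The next step estimates this discrete inner sum by integral comparison, using that $x \mapsto x^{-s}$ is decreasing on $(0,\infty)$. The target bound is
\begin{equation*}
\sum_{m = \lceil K_-/d \rceil}^{\lfloor K_+/d \rfloor} \frac{1}{m^s} \leq \frac{C_s\, 2^s}{d^{1-s}} \int_{K_-}^{K_+} \frac{du}{u^s} \quad (s < 1), \qquad \sum_{m = \lceil K_-/d \rceil}^{\lfloor K_+/d \rfloor} \frac{1}{m} \leq 2 \log\!\bigl(K_+/K_-\bigr) \quad (s=1).
\end{equation*}
The substitution $u = dx$ shows $\int_{K_-/d}^{K_+/d} x^{-s}\,dx = d^{s-1} \int_{K_-}^{K_+} u^{-s}\,du$, which matches the announced prefactor $1/d^{1-s}$ exactly; the constant $C_s\, 2^s$ (or the factor $2$ when $s=1$) then absorbs the discrepancy between the Riemann sum and the integral at the rounded endpoints.

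The final step sums over the divisors $d \mid k$. For $s = 1$ the identity $\sum_{d \mid k} d/k = \sigma_{-1}(k)$ is immediate. For $0 < s < 1$ the key computation is
\begin{equation*}
\sum_{d \mid k} \frac{d^s}{k^s} \cdot \frac{1}{d^{1-s}} = \frac{1}{k^s} \sum_{d \mid k} d^{2s-1} = \frac{\sigma_{2s-1}(k)}{k^s} = k^{s-1} \sigma_{1-2s}(k),
\end{equation*}
where the last equality uses $\sigma_\alpha(k) = k^\alpha \sigma_{-\alpha}(k)$. Plugging into the inner-sum bound produces exactly the quantity $k^{s-1} \sigma_{1-2s}(k) \int_{K_-}^{K_+} u^{-s}\,du$ claimed in the proposition.

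The main obstacle sits in the integral comparison: when $d$ is large (close to $K_-$) the lower integer endpoint $\lceil K_-/d \rceil$ collapses to $1$ or $2$ and is no longer a constant multiple of $K_-/d$, so the naive bound $\sum m^{-s} \leq \int_{K_-/d}^{K_+/d} x^{-s}\,dx$ fails. The remedy is a dichotomy on $d$: for $d \leq K_-/2$ one has $\lceil K_-/d \rceil - 1 \geq K_-/(2d)$, and monotonicity of $x^{1-s}$ together with the elementary inequality $A^{1-s} - (B/2)^{1-s} \leq 2^{1-s}(A^{1-s} - B^{1-s})$ (for $A \geq B$) yields the factor $2^s$; for $K_-/2 < d \leq K_+$ the interval $[K_-/d, K_+/d] \subset [1, 2K_+/K_-]$ is short, and a direct integral estimate from $0$ suffices. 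This accounts for the constant $2^s$ appearing in the proposition's statement.
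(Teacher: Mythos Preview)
Your decomposition by $d=(k,\ell)$ is exactly the paper's first step. The gap is in the integral comparison, and it is not where you locate it.

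When you enlarge the inner sum to $\sum_{m=\lceil K_-/d\rceil}^{\lfloor K_+/d\rfloor} m^{-s}$ you have also discarded the constraint $m\neq k/d$. That single extra term is fatal when $K_+/K_-$ is close to~$1$. Take $K=\{n,n+1\}$, $s=1$ and $d=k=n$: your inner sum then contains $m=1=k/d$ and equals~$1$, while the target bound $2\log(K_+/K_-)=2\log(1+1/n)\to 0$. For $s<1$ and the same $K$, the desired inner-sum bound $C_s 2^s d^{s-1}\int_{K_-}^{K_+}u^{-s}\,du$ is of order $d^{s-1}n^{-s}$, which cannot dominate the term $m^{-s}=1$ coming from $m=k/d=1$ when $d=k$. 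Your proposed remedies inherit the same defect. The ``elementary inequality'' $A^{1-s}-(B/2)^{1-s}\le 2^{1-s}(A^{1-s}-B^{1-s})$ is false already at $A=B$ (left side positive, right side zero), and the ratio of the two sides is unbounded as $A/B\to 1^+$; so no constant $C_s$ can absorb the discrepancy. The alternative branch ``direct integral estimate from~$0$'' yields for $s<1$ a bound $\sim (K_+/d)^{1-s}/(1-s)$, independent of $K_-$, hence again not controlled by $\int_{K_-}^{K_+}u^{-s}\,du$ when $K_+-K_-$ is small (and it diverges outright for $s=1$).

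The paper's cure is to retain the exclusion $\lambda\neq k/d$ and split the inner range at the interior point $k/d\in[K_-/d,K_+/d]$ (legitimate because $k\in K$). For $\lambda<k/d$ one uses $\lambda^{-s}\le 2^s\int_{\lambda}^{\lambda+1}t^{-s}\,dt$, valid for all $\lambda\ge 1$; for $\lambda>k/d$ one uses $\lambda^{-s}\le\int_{\lambda-1}^{\lambda}t^{-s}\,dt$, valid since then $\lambda\ge 2$. Both telescoped integrals sit inside $[K_-/d,K_+/d]$, and the substitution $u=td$ delivers the factor $d^{s-1}$ directly. The idea you are missing is precisely that the excluded value $k/d$ furnishes an interior pivot at which the two one-sided comparisons can meet without any endpoint loss.
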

Thus
 $ \sum_{\ell\in K\atop
\ell\not =k}  \frac{(k,\ell)^{2s }}{k^s \ell^s }\le   2^s M  ^{1-s}
\s_{ 1-2s}(k) ,$ if $K_+\le MK_-  $.
\begin{proof}
  Let $s=1 $. As  for $\l \ge 1$,
  $\frac{1}{\l }\le \min\big(  \int_{\l-1}^\l\frac{\dd t}{t} ,2\int^{\l+1}_\l\frac{\dd t}{t})$,
 we have
\begin{eqnarray*} \sum_{\ell\in K\atop
\ell\not =k}  \frac{(k,\ell)^{2 }}{k \ell }&\le & \sum_{d|k }  \frac{1}{(k/d)} \sum_{ K_- /d\le \l \le K_+/d\atop
\ell\not =k}  \frac{1}{\l }
\cr &=&  \sum_{d|k }  \frac{1}{(k/d)}\bigg\{ \sum_{ K_- /d\le  \l <k/d } \frac{1}{\l } +\sum_{ k/d< \l \le K_+/d }  \frac{1}{\l }\bigg\}
\cr &\le &  \sum_{d|k }  \frac{1}{(k/d)}\bigg\{ 2\int_{K_- /d }^{k/d}\frac{\dd t}{t} +\int_{k/d }^{K_+/d}\frac{\dd t}{t}\bigg\}
\cr &= &  \sum_{d|k }  \frac{1}{(k/d)}\bigg\{ 2\int^{k  }_{K_-  }\frac{\dd
u}{u} +\int_{k  }^{K_+ }\frac{\dd
u}{u}\bigg\}\le  2\sum_{d|k }  \frac{1}{(k/d)} \int^{K_+  }_{K_-  }\frac{\dd
u}{u}
.
\end{eqnarray*}
 Similarly, when $0<s<1$,
\begin{eqnarray*}\sum_{\ell\in K\atop
\ell\not=k}  \frac{(k,\ell)^{2s}}{k^s\ell^s}&\le&
  \sum_{d|k }  \frac{1}{(k/d)^s}\bigg\{ \sum_{ K_- /d\le  \l <k/d } \frac{1}{\l ^s} +\sum_{ k/d< \l \le K_+/d }  \frac{1}{\l^s }\bigg\}
\cr &\le &
  \sum_{d|k }  \frac{1}{(k/d)^s}\bigg\{ 2^sd^{s-1}\int^{k  }_{K_-  }\frac{\dd
u}{u^s}+d^{s-1}\int_{k  }^{K_+ }\frac{\dd
u}{u^s}\bigg\}
\cr &\le  & 2^s\bigg\{  \int^{K_+  }_{K_-  }\frac{\dd
u}{u^s} \bigg\} \sum_{\k|k }  \frac{ (k/\k) ^{s-1}}{\k^s}
= 2^sk^{s-1}\bigg\{  \int^{K_+  }_{K_-  }\frac{\dd
u}{u^s} \bigg\}  \s_{1-2s}(k) .
\end{eqnarray*}
\end{proof}
 This implies when combined with Lemma \ref{quadraf}, if
$K_+ \le C K_{-}$,
  \begin{eqnarray}\label{61}\qq    \big\|\sum_{k\in K} c_k f_k\big\|_2^2  \le
  C_s  \sum_{k\in K} \s_{1-2s}(k) c_k^2
  ,  \end{eqnarray}
when $1/2< s\le 1$, which  is slightly more precise than (\ref{HS1}).  In the case $s=1/2$,  not covered by the class of functions   (\ref{funct}), it also gives
 \begin{equation} \label{demi} \sum_{k,\ell \in K} c_kc_\ell \frac{(k,\ell)}{\sqrt{k\ell  }}\le C\sum_{k\in K} d(k)c_k^2.
\end{equation}

\vskip 7 pt

\section{\bf   Main Results}  \label{mr}

\medskip
 We now state  the main results of this paper. We first consider mean convergence. Let
$f\in L^2$. Define for $t>0$, and any sequence $\uc=\{c_k, k\ge 0\} $     of reals,
$$S_t(\uc)=\sum_{k\in \mathcal N\atop k\le t} c_k f_k . $$
\begin{theorem}\label{square} Let $f\sim \sum_j a_je_j $ and assume that the following condition is satisfied:

 For some real $M>1$,
\begin{equation}\label{condaj121}L=\sum_{v=0}^\infty
M^{v }
\big( \sup_{M^v \le j< M^{v+1 } }  a_j^2\big)<\infty.
\end{equation}

  a) Let $\mathcal N=\{n_k, k\ge 1\}$ be an increasing sequence of positive integers
satisfying for any $\m>1$,\begin{eqnarray}\label{hyp1} \sup_{j\ge
0}\t_{\mathcal N\cap [\m^j, \m^{j+1}[}<\infty  .
\end{eqnarray}
Then there exists a constant $C$ such that for any $\uc \in \ell_2$,
 $$\Big(\sum_{j\ge 0} \big\|S_{\m^{j+1}}(\uc)-S_{\m^{j }}(\uc)\big\|_2^2\Big)^{1/2}\le C\|\uc\|_2. $$
 b) Assume that for any $\m>1$,
\begin{eqnarray}\label{hyp2}
 \t_{\mathcal N\cap [\m^j, \m^{j+1}[} =o(1)  \qq\qq j\to \infty.
\end{eqnarray}If the coefficient sequences $\ua$, $\uc$ have each constant signs, then
$$\|\uc\|_2\le \Big(\sum_{j\ge 0} \big\|S_{\m^{j+1}}(\uc)-S_{\m^{j }}(\uc)\big\|_2^2\Big)^{1/2}\le C\|\uc\|_2. $$
 \end{theorem}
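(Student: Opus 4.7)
The plan is to prove part~(a) via an $M$-adic decomposition of $f$ in the Fourier variable, applying Lemma~\ref{basic0} band by band and combining through an almost-orthogonality argument; part~(b) will follow by adding a positivity lower bound coming from sign constancy. Set $K_j := \mathcal{N} \cap [\mu^j, \mu^{j+1})$, $S_j := S_{\mu^{j+1}}(\uc) - S_{\mu^j}(\uc) = \sum_{n \in K_j} c(n)\, f(n\cdot)$ with $c(n_k) = c_k$, and write $f = \sum_{v \ge 0} f^{(v)}$ where $f^{(v)}$ carries the Fourier coefficients $a_i$ with $M^v \le |i| < M^{v+1}$. Put $S_j^{(v)} := \sum_{n \in K_j} c(n)\, f^{(v)}(n\cdot)$, so $S_j = \sum_v S_j^{(v)}$.

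For~(a), the first step applies Lemma~\ref{basic0} with $K = K_j$ and $J = [M^v, M^{v+1}) \cap \N$. Hypothesis~(\ref{hyp1}) bounds $\tau_{K_j}(n) \le C_0 = C_0(\mu)$, yielding
\[
\|S_j^{(v)}\|_2^2 \;\le\; C\, M^v \Big(\sup_{M^v \le i < M^{v+1}} a_i^2\Big) \sum_{n \in K_j} c(n)^2.
\]
The second step observes that $S_j^{(v)}$ has Fourier support inside $\{r : \mu^j M^v \le |r| < \mu^{j+1} M^{v+1}\}$, so $\langle S_j^{(v)}, S_j^{(w)} \rangle = 0$ once $|v - w| > D := \lceil \log_M \mu \rceil + 1$. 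Cauchy--Schwarz on the surviving cross terms then gives
\[
\|S_j\|_2^2 \;\le\; (2D+1) \sum_v \|S_j^{(v)}\|_2^2 \;\le\; C'(\mu, M)\, L \sum_{n_k \in K_j} c_k^2,
\]
by~(\ref{condaj121}). Summing over $j \ge 0$, and noting that the $K_j$'s partition $\mathcal{N}$, yields the upper bound in~(a).

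For~(b), the upper bound is inherited from~(a) because the $o(1)$ hypothesis implies boundedness. For the lower bound, writing $n = dk'$, $m = d\ell'$ with $d = (n,m)$, $(k',\ell') = 1$, a direct computation gives
\[
\langle f_n, f_m \rangle = \sum_{t \ge 1} a_{\ell' t}\, \overline{a_{k' t}}\,,
\]
which is nonnegative under constant sign of $\ua$. Combined with constant sign of $\uc$, every term in $\|S_j\|_2^2 = \sum_{n, m \in K_j} c(n) c(m) \langle f_n, f_m \rangle$ is nonnegative, so the diagonal alone gives $\|S_j\|_2^2 \ge \|f\|_2^2 \sum_{n \in K_j} c(n)^2$; summing over $j$ recovers the lower bound (up to the factor $\|f\|_2$).

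The principal obstacle is the almost-orthogonality count in~(a): one must verify, uniformly in $j$, that different bands $S_j^{(v)}, S_j^{(w)}$ share Fourier content only when $|v-w| = O(\log_M \mu)$, so that the Cauchy--Schwarz cost is absorbed into a constant depending on $\mu$ and $M$. With that in hand, the argument is a clean assembly of the frequency estimate of Lemma~\ref{basic0} and the diagonal dominance forced by the sign hypothesis; the condition $\tau_{K_j} = o(1)$ beyond mere boundedness is needed only to ensure the matching constants in the two-sided comparison of~(b).
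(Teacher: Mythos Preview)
Your part~(a) is essentially the paper's argument: both split $f$ into $M$-adic frequency bands, bound each $\|S_j^{(v)}\|_2^2$ via Lemma~\ref{basic0}, and then use near-orthogonality of the bands to reassemble. The paper packages the reassembly step as Corollary~\ref{sumKu} (factor $3$), which in turn rests on Lemma~\ref{ortholem} and the choice $M>\mu$; your computation of the Fourier support and the overlap bound $D=\lceil\log_M\mu\rceil+1$ accomplishes the same thing without forcing $M>\mu$, at the cost of the constant $2D+1$ in place of $3$. Either way the mechanism is identical.

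For the lower bound in~(b) you take a different and cleaner route. The paper again goes through Corollary~\ref{sumKu}---its two-sided form under constant signs gives $\|S_j\|_2^2\ge\sum_v\|T_{K_j}(v)\|_2^2$, and the lower bound in~(\ref{inter}) then yields $\sum_v\|T_{K_j}(v)\|_2^2\ge(\|f\|_2^2-C\tau_{K_j}L)\sum_{k\in K_j}c_k^2$; it is precisely here that $\tau_{K_j}=o(1)$ is invoked, to make the subtracted error negligible. Your argument bypasses this: once $\ua$ has constant sign, $\langle f_n,f_m\rangle=\sum_t a_{\ell' t}\,a_{k' t}\ge 0$, and with $\uc$ of constant sign the off-diagonal terms in $\|S_j\|_2^2$ are all nonnegative, so the diagonal already gives $\|S_j\|_2^2\ge\|f\|_2^2\sum_{n\in K_j}c(n)^2$. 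This does \emph{not} use the $o(1)$ hypothesis at all---so your closing remark has it backwards: with your positivity argument the assumption~(\ref{hyp2}) is simply unnecessary for the lower inequality (up to the $\|f\|_2$ factor you correctly flag), whereas the paper's route genuinely consumes it.
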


\begin{remark}
By  (\ref{cac1}), condition (\ref{hyp1}) is satisfied as soon as
$$ \sup_{  k\in \mathcal N}A(k)<\infty  .
$$
\end{remark}

 \vskip 6 pt  We also establish   new almost everywhere convergence results.
  \begin{definition}\label{universal} We say that  a sequence of coefficients
$\uc $ is universal if for any orthonormal system $\Phi $ on a
bounded interval, the
 series $\sum_{k=1}^\infty c_k\p_k $ converges a.e.
  \end{definition}
 Typically,   $\uc
$ is universal if the series  $\sum_k c^2_k \log^2 k$ converges (Rademacher-Menshov
theorem), or if the series
 $\sum_k c_k^2  ( \log   |c_k|^{-1} )^{1+h}
 ( \log k )^{1-h}$ converges for some $0\le h<1$ (Tandori's
theorem \cite{Ta2}).
And   the condition $\sum_{k} c_k^2(\log  |c_k|^{-1})^2<\infty $, with $c_k\not=0$,  $c_k\to 0$   is  necessary  for $\uc
$ to be  universal, see \cite{Ta1}.

    \begin{theorem} \label{thN} Assume that there exist a non-increasing sequence of positive reals $\{\e(j), j\ge 1\}$   and an
increasing sequence of positive integers $\{j_r, r\ge 1\}$,   such that
\begin{eqnarray}  \label{condaj1}   A= \sum_{|a_\ell|>\e(\ell)}    |a_\ell |   <\infty,
 \qquad  B=\sum_r j_{r+1}^{1/2}
\e(j_r) <\infty.
 \end{eqnarray}
 Let  $1\le k_1<k_2<\ldots $ be an  increasing sequence of integers, which we denote by $K$. Then the series $\sum_{n\ge 1}
c_{ n} {f _{k_n}}$ converges a.e. for any
 coefficient sequence $\{c_n, n\ge 1\}$ such that
\begin{eqnarray}  \label{cconv1}
 \big\{c_n  \max\big(1, \t_K({k_n} )^{1/2}  \big) , n\ge 1\big\}
\end{eqnarray} is   universal.
\vskip 2pt
b)   In particular, the same conclusion holds if
\begin{eqnarray}  \label{condaj12}   A= \sum_{|a_\ell|>\e(\ell)}    |a_\ell |  <\infty,
 \qquad     B_1=\sum_{j}    \e^2(j) <\infty.
 \end{eqnarray}
in place of (\ref{condaj1}).  \end{theorem}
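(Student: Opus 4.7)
The plan is to split the Fourier series as $f = g + h$ with $g = \sum_{|a_\ell|>\e(\ell)} a_\ell e_\ell$ and $h = f - g$, to establish the uniform-in-$N$ quadratic-form bound
\[
\Big\|\sum_{n\le N} c_n f(k_n x)\Big\|_2^2 \;\le\; C \sum_{n=1}^{N} c_n^2 \max\bigl(1,\t_K(k_n)\bigr),
\]
and then to run the Schur dilation argument already used in the proof of Theorem~\ref{th_bremont}. Such a bound lets one extend the normalized functions $f(k_n x)/\bigl(C\max(1,\t_K(k_n))\bigr)^{1/2}$ to an orthonormal system on a bounded interval $X\supseteq[0,1[$, after which the universality hypothesis~(\ref{cconv1}) on $\{c_n\max(1,\t_K(k_n)^{1/2})\}$ forces a.e.\ convergence of the transferred series on $X$, hence of $\sum_n c_n f(k_n x)$ on $[0,1[$.

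The $g$-piece is handled by the triangle inequality: by the first half of~(\ref{condaj1}), $\sum_\ell |a_\ell^{(g)}| \le A < \infty$, and for each fixed $\ell \ne 0$ the system $\{e(\ell k_n x)\}_n$ is orthonormal on $\T$, so Minkowski gives $\|\sum_n c_n g(k_n x)\|_2 \le A\,\|\uc\|_2$, which is in turn dominated by $A\bigl(\sum_n c_n^2 \max(1,\t_K(k_n))\bigr)^{1/2}$.

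For the $h$-piece I decompose along $\{j_r\}$, writing $h = \sum_r h_r$ with $h_r = \sum_{j_r \le \ell < j_{r+1}} a_\ell^{(h)} e_\ell$. Monotonicity of $\e$ gives $|a_\ell^{(h)}| \le \e(\ell) \le \e(j_r)$ on $[j_r,j_{r+1})$, so Lemma~\ref{basic0} applied with $J = [j_r,j_{r+1})$ and $K = \{k_n\}$ yields
\[
\Big\|\sum_n c_n h_r(k_n x)\Big\|_2^2 \;\le\; C\,j_{r+1}\,\e(j_r)^2 \sum_n c_n^2 \max\bigl(1,\t_K(k_n)\bigr).
\]
Taking square roots, summing in $r$ by Minkowski's inequality and invoking $B = \sum_r j_{r+1}^{1/2}\e(j_r) < \infty$ from the second half of~(\ref{condaj1}) yields the bound for $h$. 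Combined with the $g$-bound this proves the displayed quadratic-form estimate; uniformity in $N$ is automatic since the whole argument applies with $c_n$ replaced by $c_n\mathbf{1}_{n \le N}$. This completes~(a).

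Part~(b) is to be reduced to~(a) by exhibiting, from the sole assumption $\sum_j\e^2(j) < \infty$ (with $\e$ non-increasing), an increasing sequence $\{j_r\}$ for which $\sum_r j_{r+1}^{1/2}\e(j_r) < \infty$. Monotonicity together with $\e\in\ell^2$ forces $\sqrt{j}\,\e(j)\to 0$, so $\{j_r\}$ can be taken along which $\sqrt{j_r}\,\e(j_r)$ is as small as desired; the construction must then balance this decay against the growth ratios $\sqrt{j_{r+1}/j_r}$. I view this selection of $\{j_r\}$ as the main technical obstacle: the balance is delicate, as larger gaps inflate $\sqrt{j_{r+1}/j_r}$ while smaller gaps slow the decay of $\sqrt{j_r}\,\e(j_r)$ in $r$.
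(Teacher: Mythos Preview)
Your argument for part~(a) is correct and close in spirit to the paper's. The one difference is that the paper treats the regular piece $g=f^\flat$ via the Carleson--Hunt maximal inequality, obtaining a.e.\ convergence of $\sum_n c_n f^\flat_{k_n}$ directly from $\sum_n c_n^2<\infty$, and reserves Schur's lemma for $f^\sharp$ alone; you instead fold $g$ into the same quadratic-form bound and apply Schur once. Both routes yield~(a).

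Part~(b), however, cannot be obtained by reducing to~(a). The obstacle you flag is not merely technical but fatal: take $\e(j)=1/(\sqrt{j}\,\log j)$ for $j\ge 2$. This is non-increasing with $B_1=\sum_j \e(j)^2=\sum_j 1/(j\log^2 j)<\infty$, yet for \emph{every} strictly increasing integer sequence $\{j_r\}$ one has
\[
\sum_r j_{r+1}^{1/2}\e(j_r)
=\sum_r\frac{\sqrt{j_{r+1}/j_r}}{\log j_r}
\ge \frac12\sum_r\frac{\log j_{r+1}-\log j_r}{\log j_r}
\ge \frac12\sum_r\log\Bigl(\frac{\log j_{r+1}}{\log j_r}\Bigr)=\infty,
\]
using $e^{x}\ge 1+x$ in the first inequality and $s-1\ge\log s$ (with $s=\log j_{r+1}/\log j_r$) in the second. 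So no selection of $\{j_r\}$ can make $B<\infty$, and your reduction collapses.

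What is missing is an orthogonality gain, not a combinatorial selection. Take $j_r=M^r$ geometric and invoke the near-orthogonality of the dilated blocks $T_K(r)=\sum_n c_n\sum_{M^r\le j<M^{r+1}} a_j^{(h)} e_{jk_n}$ (Lemma~\ref{ortholem}, Corollary~\ref{sumKu}): this gives
\[
\Bigl\|\sum_n c_n h(k_nx)\Bigr\|_2^2\;\le\; 3\sum_r\|T_K(r)\|_2^2
\;\le\; C\Bigl(\sum_r M^{r}\e(M^r)^2\Bigr)\sum_n c_n^2\max\bigl(1,\t_K(k_n)\bigr),
\]
and the first factor is comparable to $B_1=\sum_j\e(j)^2$. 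In other words, it is the replacement of the Minkowski $\ell^1$-sum over $r$ by an $\ell^2$-sum---available because the $M$-adic frequency blocks barely interact---that bridges $B$ to $B_1$; this is the content of the second half of Lemma~\ref{basic0101}.
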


\begin{remark}\label{thNr}

\medskip
\item (i) If condition   (\ref{condaj1}) is satisfied  for   $j_r=M^r$, for some $M>1$, then $B<\infty$
means $ \sum_r M^{r/2 } \e(M^r) <\infty$, which is a stronger
requirement than $ \sum_r M^{r  } \e^2(M^r) <\infty$. And this is
equivalent to $B_1<\infty$ in (\ref{condaj12}). Hence
(\ref{condaj1}) can be replaced by the much weaker condition
(\ref{condaj12}) when $j_r $ is geometrically growing. \vskip 3pt
\item(ii) Suppose now  that $f$ satisfies   assumption (\ref{condaj121}).
    Then (\ref{condaj12}) is fulfilled. Indeed, choose
$$\e_\ell= \sup_{M^r\le |j|\le M^{r+1}}|a_j|, \qq  M^r\le \ell\le M^{r+1}, \ r=0,1,\ldots$$
The first requirement in (\ref{condaj12}) is   trivially satisfied since the summation index is empty, whereas  the second
  is, by (i), equivalent to (\ref{condaj121}).
\vskip 3pt
\item(iii) Let $f\in \hbox{Lip}_\alpha(\T)$, $\alpha>1/4 $. Then $f$ satisfies condition (\ref{condaj12}). Indeed, it is well-known that if
$f\in \hbox{Lip}_\alpha(\T)$, $0<\alpha\le 1 $,   then $\sum_{2^r
<j\le 2^{r+1} }a_j^2\le C2^{-2r\a }  $. See \cite{Z}, inequalities
(3$\cdot$3) p.\ 241. Pick  a real $\b$ such that $2\a>\b>1/2$ and
take $\e(j)= j^{-\b}$, $M=2$. Condition (\ref{condaj12}) is
satisfied with this choice since $\sum_j\e^2(j)<\infty$ and
$$\sum_{2^r <j\le 2^{r+1}\atop
|a_j|>\e(j)} |a_j| \le \sum_{2^r <j\le 2^{r+1} } \frac{|a_j|^2}{\e(j)} \le C2^{ r\b }2^{-2r\a } = C 2^{- r(2\a-\b) },$$
so that $A<\infty$.
\vskip 3pt
\item(iv) For any
$\a>0$, there exists $f\in L^2(\T)$, $\int_\T f=0$, such that
$f\notin \hbox{Lip}_\alpha(\T)$ but $f$ satisfies condition
(\ref{condaj12}). Such an $f$ can be built as follows. Let
$\psi:\R^+\to \R^+$ be such that
\begin{eqnarray*}\begin{cases} \psi(r)2^{-r/2}\downarrow 0\  , \  \psi(r)2^{\a r}\uparrow\infty\quad {\rm as} \ r\uparrow
\infty,\cr
\sum_r \psi^2(r)<\infty.\end{cases}
\end{eqnarray*}
   Let $\e:\R^+\to \R^+$ be decreasing and defined by
\begin{eqnarray*}\e(x)=\begin{cases}  \psi(r)2^{-r/2}\quad {\rm if} \ 2^r<j\le 2^{r+1}, \quad r \ {\rm even},\cr
 {\rm linear\  otherwise}.\end{cases}
\end{eqnarray*}
We choose $f$ such that its Fourier coefficients satisfy
\begin{eqnarray*} \begin{cases}  a_j=\psi(r)2^{-r/2}\quad {\rm if} \ 2^r<|j|\le 2^{r+1}, \quad r\ {\rm even},\cr
\displaystyle{\sum_{r\, {\rm odd}}\sum_{2^r<|j|\le 2^{r+1}} |a_j|<\infty}.\end{cases}
\end{eqnarray*}
Clearly
$$ \frac{\sum_{2^r<|j|\le 2^{r+1}} |a_j|}{2^{r(\frac{1}{2}-\a)}}=C\frac{\psi(r)2^{ \frac{r}{2}}}{2^{r(\frac{1}{2}-\a)}}=\psi(r)2^{\a r}
\uparrow\infty.$$ Hence, in view of \cite{Z}, inequality (3$\cdot$4)
p.\ 241, $f\notin \hbox{Lip}_\alpha(\T)$. Further,
$$\sum_{2^r<|j|\le 2^{r+1}} |a_j|^2=  2^r\psi^2(r)2^{-r}= \psi^2(r),$$
when $r$ is even. Thus
$$\sum_{j\in \Z
*}  |a_j|^2=\sum_{r\ge 0}\sum_{2^r<|j|\le 2^{r+1}} |a_j|^2\le \sum_{r\, {\rm even}}\psi^2(r)+
\sum_{r\, {\rm odd}}\sum_{2^r<|j|\le 2^{r+1}} |a_j|   <\infty,$$ by assumption. Moreover, by construction,
$$\sum_{2^r<|j|\le 2^{r+1}\atop |a_j|>\e(j)} |a_j|=0,$$
when $r$ is even. It follows that
$$ \sum_{  |a_j|>\e(j)} |a_j|= \sum_{r\, {\rm odd}}\sum_{2^r<|j|\le 2^{r+1}\atop |a_j|>\e(j)} |a_j|\le \sum_{r\, {\rm
odd}}\sum_{2^r<|j|\le 2^{r+1} } |a_j| <\infty, $$ by assumption.
Now
$$ \sum_{j} \e(j)^2\le 3 \sum_{r\, {\rm even}} 2^r\psi^2(r)2^{-r}=3 \sum_{r\, {\rm even}}  \psi^2(r)  <\infty.$$
Therefore condition (\ref{condaj12}) is satisfied, as claimed.
\end{remark}\vskip 5pt
  We will also obtain the following useful result, as a combination of the above Theorem with Lemma
\ref{gb}.

\begin{corollary}\label{cNc}
Let $1\le k_1<k_2<\ldots $ be an  increasing sequence of integers.
Assume that (\ref{condaj12}) is satisfied and that
\begin{eqnarray}\label{cconv2}   \sum_n c_n^2A ({k_n} )(\log n)^2<\infty.
\end{eqnarray}
 Then the series  $ \sum_{n}   c_n  f_{k_n} $ converges a.e. \end{corollary}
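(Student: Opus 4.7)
The plan is to verify the universality hypothesis of Theorem \ref{thN}(b) by bounding $\tau_K(k_n)$ via Lemma \ref{gb}. Since assumption (\ref{condaj12}) already appears among the hypotheses of the corollary, Theorem \ref{thN}(b) reduces the a.e.\ convergence of $\sum_n c_n f_{k_n}$ to establishing that the sequence $\{c_n \max(1, \tau_K(k_n)^{1/2}), n \ge 1\}$ is universal in the sense of Definition \ref{universal}.

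To establish this universality, I would invoke the classical Rademacher-Menshov theorem, which guarantees that any coefficient sequence $\{d_n\}$ satisfying $\sum_n d_n^2 (\log n)^2 < \infty$ is universal. Applied with $d_n = c_n \max(1, \tau_K(k_n)^{1/2})$, the universality reduces to the summability
\[
\sum_n c_n^2 \max(1, \tau_K(k_n)) (\log n)^2 < \infty.
\]

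Lemma \ref{gb} then provides the key arithmetic bound $\tau_K(k) \le C \log(eK_+/k) A(k)$ on finite index sets $K$. The logarithmic factor $\log(eK_+/k)$ is bounded by an absolute constant when one restricts to scale-bounded subsets (as in the remark about mutually coprime integers following Lemma \ref{gb}). By decomposing $K$ into geometric blocks of the form $K \cap [M^r, M^{r+1})$, patching blockwise bounds, and controlling cross-block contributions by their geometric decay, one obtains $\max(1, \tau_K(k_n)) \le C A(k_n)$. Combined with hypothesis (\ref{cconv2}) this yields
\[
\sum_n c_n^2 \max(1, \tau_K(k_n)) (\log n)^2 \le C \sum_n c_n^2 A(k_n) (\log n)^2 < \infty,
\]
so the universality hypothesis of Theorem \ref{thN}(b) holds and a.e.\ convergence follows.

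The main obstacle is justifying $\max(1, \tau_K(k_n)) \le C A(k_n)$ uniformly in $n$ for the ambient (possibly infinite) set $K$, because the naive application of Lemma \ref{gb} to the full $K$ incurs a logarithmic loss in $K_+/k_n$ that can grow without bound; (\ref{cconv2}) leaves no slack in the $(\log n)^2$ factor to absorb such a loss. The resolution hinges on a geometric decomposition of $K$ so that the logarithmic factor is absolute on each piece, or alternatively on exploiting features of the proof of Theorem \ref{thN}(b) that effectively localize the tau-sum to dyadic windows around each $k_n$.
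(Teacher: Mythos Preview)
Your approach is exactly the one the paper takes: invoke Theorem \ref{thN}(b), reduce via Rademacher--Menshov to the condition $\sum_n c_n^2 \max(1,\tau_K(k_n))(\log n)^2<\infty$, and then appeal to Lemma \ref{gb}. The paper's proof is essentially the two-line computation
\[
\sum_v\sum_{2^v<n\le 2^{v+1}} c_n^2\,\tau_K(k_n)(\log n)^2\ \le\ C\sum_v\sum_{2^v<n\le 2^{v+1}} c_n^2\, A(k_n)(\log n)^2,
\]
which amounts precisely to the pointwise assertion $\tau_K(k_n)\le C\,A(k_n)$ that you isolate.

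You have, however, put your finger on a genuine difficulty that the paper's own proof glosses over. Your proposed bound $\max(1,\tau_K(k_n))\le C\,A(k_n)$ for the ambient infinite $K$ is \emph{false}: already for $K=\N$ one has
\[
\tau_{\N}(k)\ \ge\ \sum_{m\ge 2}\frac{(mk,k)}{mk}\ =\ \sum_{m\ge 2}\frac{1}{m}\ =\ \infty
\]
for every $k$, so Theorem \ref{thN} applied as a black box is vacuous in this case (which is exactly the case ``$K=\N$'' singled out at the end of the paper's proof). Your ``cross-block geometric decay'' argument cannot rescue this, since for $\ell$ in the far block $(2^jk,2^{j+1}k]$ the terms $(\ell,k)/\ell$ need not decay with $j$ at all (take $\ell=2^jk$). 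What \emph{is} the right idea is your second suggestion: go back into the proof of Theorem \ref{thN} and localize. Lemma \ref{basic0101} gives the bound $\|\sum_{k\in K'}c_kf_k^\sharp\|_2^2\le CB_1\sum c_k^2\max(1,\tau_{K'}(k))$ for any \emph{finite} $K'$, so in the Rademacher--Menshov chaining one should use only $\tau_{K'_v}$ on each dyadic block $K'_v=\{k_n:2^v<n\le 2^{v+1}\}$ (this is presumably what the paper's dyadic splitting is meant to signal). One then still needs to control $\log\big(e(K'_v)_+/k_n\big)$ from Lemma \ref{gb}, which is not automatic for arbitrary $(k_n)$; this residual point is left unaddressed in the paper as well.
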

By Remark \ref{thNr}-ii),  the same conclusions are reached if  $f$ satisfies   assumption (\ref{condaj121}).

 \begin{remark} \rm
     As $\  A(k)\le
d(k)$,   (\ref{cconv2}) is satisfied whenever
\begin{eqnarray}  \label{cconv4}  \sum_{ n
  } c_n^2d( k_n)  (\log n)^2    <\infty.
\end{eqnarray}
Consequently, under  condition (\ref{cconv4})  the series
$\sum_{n\ge 1} c_{ n} {f _{ n}}$ converges a.e. for any $f\in
\hbox{Lip}_\alpha(\T)$, $\alpha>1/4 $. The     presence of the
factor $d(k_n)$   is   important. Replacing $d(j)$ by the classical
bound:  for some  $c_0>2$,
  \begin{equation} \label{divbound} d(j) = {\mathcal O}\big( c_0^{\log j/\log\log j}\big),
\end{equation}
   gives rise to a much weaker result. This strictly includes  a recent result obtained by Aistleitner \cite{A}
   who proved by using a fine
diophantine estimate due to Dyer and Harman, that the condition
\begin{equation*} 
\sum_{k=1}^\infty c_k^2 \exp\Big(\frac{2\log k}{\log\log k}\Big)<\infty,
\end{equation*}
is   sufficient for the   convergence almost everwhere of the series
$\sum_{k=0}^\infty c_k f( kx)$. It is interesting to compare the
multiplicative factor of $c_k^2$ in the above with the shape of the
bound of the divisor function in (\ref{divbound}). This can also be
deduced from Theorem 1.1 in    \cite{W1} published shortly
afterward, and which was based on properties of  the Erd\"os-Hooley
function
$$\D(v)=\sup_{u\in \R}\sum_{d|v \atop
x<d\le ex} 1.$$
In place of condition (\ref{condaj1}), we assumed that $f$ satisfies
\begin{equation} \label{hooley} \sum_{\nu\ge 1 }  a_{ \nu }^2\D(  \nu  ) <\infty.
\end{equation}
This is fulfilled if $f\in \hbox{Lip}_\alpha(\T)$, $\alpha>1/4 $,
but also   if $a_\nu= \mathcal O  (\nu^{-\b})$, $\b>1/2$. Conditions
(\ref{condaj1}) and (\ref{hooley}) are, however, hardly  comparable.
As is well known,     $\D$  has  a
   slower   mean behavior than $d$. Indeed,
  $$\frac1{x}  \sum_{v\le x} d(v) \sim x,\qq {\rm while}\qq \frac1{x}  \sum_{n\le x} \D(n)= \mathcal O\Big( e^{c \sqrt{\log\log x \cdot
\log\log\log  x}}
\Big) $$ for a suitable constant $c>0$; see \cite{T1}.
Hence it follows by   partial summation that if $f$ has   monotonic Fourier coefficient sequence, condition (\ref{hooley}) can be
replaced by the considerably much weaker condition
\begin{equation} \label{hooley1} \sum_{\nu\ge 1 }  a_{ \nu }^2e^{c \sqrt{\log\log \nu \cdot
\log\log\log  \nu} }<\infty.
\end{equation}
 \end{remark}

When $|a_j|=\mathcal O(j^{-s})$, $s>1/2$, the above corollary can
be much improved.
\begin{theorem}\label{cNc1}
Let $1\le k_1<k_2<\ldots $ be an  increasing sequence of integers.
Let $f(x)= \sum_{j=1}^\infty a_j \sin 2\pi jx $   and assume that
$|a_j|=\mathcal O(j^{-s})$, $s>1/2$.
 Assume that
 \begin{eqnarray} \label{cconv2}
 \sum_n c_n^2\s_{1-2s}({k_n} )(\log n)^2<\infty.
\end{eqnarray}
 Then the series  $ \sum_{n}   c_n  f_{k_n} $ converges a.e. \end{theorem}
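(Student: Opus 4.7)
\bigskip
\noindent\emph{Proof sketch.} The plan is to follow the three-step scheme of the proof of Theorem~\ref{th_bremont}: establish an $L^2$ bound on the partial sums via the GCD quadratic form, invoke Schur's Lemma to embed the normalized system into an orthonormal one, and conclude via the Rademacher--Menshov theorem. The improvement over Theorem~\ref{th_bremont} comes from the sharper dyadic block estimate that is available when $|a_j|=\mathcal O(j^{-s})$, $s>1/2$, which removes the auxiliary $\p$-factor present in~(\ref{HS1}).

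First, by the monotonicity of the $L^2$ norm with respect to Fourier coefficients (remark following Theorem~\ref{th_bremont}), the hypothesis $|a_j|=\mathcal O(j^{-s})$ implies $|\langle f_k,f_\ell\rangle|\le C_s(k,\ell)^{2s}/(k^s\ell^s)$, with $C_s$ depending only on $s$ and on the implied constant. Partition the subsequence dyadically by setting $\mathcal B_\nu=\{n:k_n\in[2^\nu,2^{\nu+1})\}$. Each block has $K_+\le 2K_-$, so the estimate~(\ref{61}) (coming from Proposition~\ref{prop01} combined with Lemma~\ref{quadraf}) yields the block bound
$$\Big\|\sum_{n\in\mathcal B_\nu}c_nf_{k_n}\Big\|_2^2\le C_s\sum_{n\in\mathcal B_\nu}c_n^2\s_{1-2s}(k_n),$$
with \emph{no} auxiliary $\p$-factor.

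The crux of the argument is to combine these block estimates into a global bound of the form $\|\sum_{n=1}^N c_nf_{k_n}\|_2^2\le C'_s\sum_{n=1}^N c_n^2\s_{1-2s}(k_n)$. I would control the cross-block inner products $\langle T_\nu,T_{\nu'}\rangle$, where $T_\nu=\sum_{n\in\mathcal B_\nu}c_nf_{k_n}$, by applying the Schur test to the GCD kernel on $\mathcal B_\nu\times\mathcal B_{\nu'}$. Expanding $(k,\ell)^{2s}=\sum_{d|(k,\ell)}J_{2s}(d)$ as in Lemma~\ref{02} and counting divisors in $[2^{\nu'},2^{\nu'+1})$, one verifies the asymmetric one-sided estimates
$$\sum_{\ell\in\mathcal B_{\nu'}}\frac{(k,\ell)^{2s}}{(k\ell)^s}\le C_s\,2^{(\nu'-\nu)(1-s)}\s_{1-2s}(k),\qquad \sum_{k\in\mathcal B_{\nu}}\frac{(k,\ell)^{2s}}{(k\ell)^s}\le C_s\,2^{-(\nu'-\nu)(1-s)}\s_{1-2s}(\ell),$$
whose geometric mean (with the weights $\sqrt{\s_{1-2s}(\cdot)}$) cancels the growing factor $2^{\pm(\nu'-\nu)(1-s)}$ and, thanks to $s>1/2$, leaves a contribution summable in $|\nu-\nu'|$. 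Once the global $L^2$ bound is proved, set $\tilde f_{k_n}=f_{k_n}/\sqrt{C'_s\s_{1-2s}(k_n)}$; then $\|\sum c_n\tilde f_{k_n}\|_2^2\le\sum c_n^2$, so by Schur's Lemma the system $\{\tilde f_{k_n}\}_{n\ge 1}$ extends to an orthonormal family on an enlarged probability space. The Rademacher--Menshov theorem then yields a.e.\ convergence of $\sum_n c_n\tilde f_{k_n}$ under $\sum c_n^2(\log n)^2<\infty$, equivalently a.e.\ convergence of $\sum_n c_nf_{k_n}$ under the stated hypothesis.

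The main obstacle is the combination step itself. A naive global bound $\|\sum c_nf_{k_n}\|_2^2\le C\sum c_n^2\s_{1-2s}(k_n)$ cannot hold in full generality: by G\'al's theorem $\|\sum_{n\le N}f_n\|_2^2$ is of order $N(\log\log N)^2$ while $\sum_{n\le N}\s_{-1}(n)$ is of order $N$, so there is a loss of $(\log\log N)^2$ for the extremal sequence $k_n=n$, $c_n\equiv 1$ (which however violates the theorem's hypothesis). The Schur-test argument with asymmetric divisor sums and geometric-mean cancellation sketched above is the essential technical device for circumventing this obstruction under the assumption $s>1/2$.
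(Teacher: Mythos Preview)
Your combination step fails, and the gap is precisely the one you flag yourself. After taking the geometric mean of your two one-sided estimates, the factors $2^{\pm(\nu'-\nu)(1-s)}$ cancel \emph{exactly}; the weighted Schur test (with auxiliary weights $p_k=\sqrt{\s_{1-2s}(k)}$) therefore only yields
\[
\big|\langle T_\nu,T_{\nu'}\rangle\big|\ \le\ C_s\,\sqrt{X_\nu}\sqrt{X_{\nu'}},\qquad X_\nu:=\sum_{n\in\mathcal B_\nu}c_n^2\,\s_{1-2s}(k_n),
\]
with \emph{no} residual decay in $|\nu-\nu'|$. Summing gives $\big\|\sum_\nu T_\nu\big\|_2^2\le C_s\big(\sum_\nu\sqrt{X_\nu}\big)^2$, which is not controlled by $\sum_\nu X_\nu$. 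There is nothing special about $s>1/2$ here that produces summability; that condition is already consumed in making $\s_{1-2s}$ finite. Your own G\'al counterexample shows the global bound $\|\sum c_nf_{k_n}\|_2^2\le C\sum c_n^2\s_{1-2s}(k_n)$ is simply false for $k_n=n$, $s=1$, so no argument can establish it, and Schur's Lemma~\ref{schur} requires the inequality for \emph{all} coefficient vectors, not only those satisfying the hypothesis~(\ref{cconv2}).

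The paper avoids this altogether by never seeking a global norm bound. It uses only the single-block estimate~(\ref{61}) and spends the $(\log n)^2$ in the hypothesis twice, both times at the block level: first, multiplying~(\ref{61}) by $r^2$ and summing over $r$ gives $\sum_r r^2\,\|T_r\|_2^2<\infty$, whence $\sum_r r^2|T_r(x)|^2<\infty$ a.e., and Cauchy--Schwarz against $\sum r^{-2}$ yields a.e.\ convergence of $\sum_{j\le 2^m}c_jf_j$; second, the standard Rademacher--Menshov maximal inequality applied \emph{inside} each block (again via~(\ref{61})) gives $\sum_r\|\max_{2^r<i\le j\le 2^{r+1}}|\sum_{\ell=i}^j c_\ell f_\ell|\|_2^2\le C_s\sum_r r^2 X_r<\infty$, which fills the gaps between dyadic indices. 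In short, the argument is ``block estimate $+$ dyadic Cauchy condition $+$ block-wise maximal inequality'', not ``global quasi-orthogonality $+$ Schur $+$ Rademacher--Menshov''.
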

     Our paper is organized as follows. In Section   \ref{ntt}, we  collect    estimates  of number theoretical type,  some estimates for quadratic forms  and   tools from the theory of orthogonal sums. The remainding sections are devoted to the proofs of the main results.


\bigskip
 \section{\bf Auxiliary Results}  \label{ntt}

\medskip
    \begin{lemma} \label{basic01} For  any positive integers $k \le N$,
$$\sum_{1\le \ell \le  N\atop \ell\not= k}\frac{(\ell, k)}{\ell \vee k} \le C\log (\frac{eN}{k})  \sum_{\k |k}\frac{\phi(\k)}{  \k} , $$
where $C$  is an absolute constant.
  \end{lemma}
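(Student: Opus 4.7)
The plan is to use the classical identity
\[
(\ell,k)=\sum_{d\mid(\ell,k)}\phi(d)=\sum_{\substack{d\mid k\\ d\mid\ell}}\phi(d)
\]
to break up the coprime data in the numerator, and then organize the sum over $\ell$ according to which divisor $d$ of $k$ is used. Writing $\ell=dm$, the condition $d\mid\ell$, $\ell\le N$, $\ell\ne k$ becomes $1\le m\le N/d$, $m\ne k/d$. Swapping the order of summation,
\[
\sum_{\substack{1\le\ell\le N\\ \ell\ne k}}\frac{(\ell,k)}{\ell\vee k}
=\sum_{d\mid k}\phi(d)\sum_{\substack{1\le m\le N/d\\ m\ne k/d}}\frac{1}{dm\vee k}.
\]

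Next I would split the inner sum into the range $m<k/d$ (where $dm<k$, so $dm\vee k=k$) and the range $m>k/d$ (where $dm\vee k=dm$). The first range contributes $(k/d-1)/k=1/d-1/k$ per divisor $d$, so after weighting by $\phi(d)$ and summing over $d\mid k$ the contribution telescopes using the two identities $\sum_{d\mid k}\phi(d)=k$ and $\sum_{d\mid k}\phi(d)/d=A(k)$, giving exactly $A(k)-1$. The second range contributes $\frac{\phi(d)}{d}\sum_{k/d<m\le N/d}\frac{1}{m}$ for each $d\mid k$.

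For the inner harmonic-like sum I would use the elementary bound
\[
\sum_{k/d<m\le N/d}\frac{1}{m}\le\int_{k/d}^{N/d}\frac{dx}{x}=\log(N/k),
\]
valid for any $d\mid k$ since $N\ge k$ implies $N/d\ge k/d$. Summing over $d\mid k$ with weight $\phi(d)/d$ yields a bound $\log(N/k)\,A(k)$. Adding the two pieces,
\[
\sum_{\substack{1\le\ell\le N\\ \ell\ne k}}\frac{(\ell,k)}{\ell\vee k}\le (A(k)-1)+\log(N/k)\,A(k)\le A(k)\bigl(1+\log(N/k)\bigr)=A(k)\log\!\bigl(eN/k\bigr),
\]
which gives the claim with the absolute constant $C=1$.

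The whole argument is essentially bookkeeping once the key Möbius/Euler identity for $(\ell,k)$ is invoked, so there is no real obstacle beyond being careful with the excluded term $m=k/d$ and the boundary case $N=k$ (where the second range is empty and only the $A(k)-1$ piece survives, still dominated by $A(k)\log(eN/k)=A(k)$). The cleanest part of the argument is that the $m<k/d$ contribution telescopes exactly into $A(k)-1$ using $\sum_{d\mid k}\phi(d)=k$, eliminating any need for technical estimates of sums like $\sum_{d\mid k}2^{\omega(d)}/d$ that a naive Möbius-inversion approach on the coprime sum would produce.
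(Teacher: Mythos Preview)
Your proof is correct and takes a genuinely different route from the paper. The paper decomposes the sum over $\ell$ according to the \emph{exact} value of $(\ell,k)=d$, which forces a coprimality condition $(\lambda,k/d)=1$ on the quotient $\lambda=\ell/d$; to handle the resulting sum $\sum_{k/d<\lambda\le N/d,\ (\lambda,k/d)=1}1/\lambda$ the paper invokes the van~Lint--Richert sieve estimate $\sum_{m\le x,\ (m,k)=1}1\le C\frac{\phi(k)}{k}x$ together with Abel summation. Your approach instead expands the numerator via the Euler identity $(\ell,k)=\sum_{d\mid(\ell,k)}\phi(d)$, which replaces the exact-gcd condition by a mere divisibility condition $d\mid\ell$; the coprimality disappears entirely and the inner sum over $m=\ell/d$ becomes an unconstrained harmonic tail, bounded by the trivial integral comparison. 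The payoff is a completely elementary argument with the explicit constant $C=1$, and the pleasant telescoping of the lower range into exactly $A(k)-1$ via $\sum_{d\mid k}\phi(d)=k$. The paper's approach, by contrast, is more in the spirit of later arguments in Section~\ref{ntt} where coprimality and sieve-type counts recur, but for this particular lemma your argument is the cleaner one.
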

    \begin{proof}
    Let $k <\ell \le  N$.  Then,
 $$\sum_{k <\ell \le  N}\frac{(\ell, k)}{\ell }\le \sum_{d|k}\sum_{k/d<\l < N/d\atop (\l, k/d)=1}\frac{1}{ \l } , $$
where we write      $\ell= \l d$, $k=\k d$, $(\ell, k)=d$.
 To estimate the inner sum, we  use van Lint and Richert estimate (\cite{LR}, Lemma 2): for $x\ge 1$ and $k$ such that $P^+ (k)\le
x$, where
$P^+ (k)$ is the largest prime factor of $k$, we have
\begin{equation}\label{LRest} \sum_{1\le m\le x \atop (m,k)=1}1\le C \frac{\phi(k)}{k}x.
\end{equation}
   By   \cite{T2} p.\ 3, if  $a_n$ are complex numbers,
$A(t)=\sum_{n\le t}a_n$ and
$b\in
\mathcal C^1([1,x])$,
\begin{equation}\label{simple}\sum_{1\le n\le x} a_nb(n)= A(x)b(x)-\int_1^x A(t) b'(t) dt,
\end{equation}
  Take $a_\l=0$ if $1\le \l< k/d$ and $a_\l= \chi\{ (\l, k/d)=1\}$ if $\l\ge k/d$, $b(t)=t^{-1}$. Then
$A(t)=0$ if $t< k/d$. Now if $k/d\le t\le N/d$, obviously $  P (k/d)\le t$. And   (\ref{LRest}) applies to give
$$ A(t) \le C\frac{\phi(k/d)}{(k/d)}t.$$
   Therefore
 \begin{eqnarray}\label{bba}
\sum_{k/d<\l < N/d\atop (\l, k/d)=1}\frac{1}{ \l } &= & \frac{A(N/d)}{(N/d) } + s \int_{k/d}^{N/d}
A(t)\frac{\dd t}{t^{2}}\cr
&\le &  \frac{A(N/d)}{(N/d) } +   C   \frac{\phi(k/d)}{(k/d)} \int_{k/d}^{N/d}
 \frac{\dd t}{t } \cr
&\le &C \Big(\frac{1 }{(N/d) }\frac{\phi(k/d)}{(k/d)}(N/d) +    \frac{\phi(k/d)  }{(k/d) }\log (N/k)
\Big)
\cr
   &\le & C   \frac{\phi(k/d)  }{(k/d) }\log (\frac{eN}{k})
 .
 \end{eqnarray}
 Henceforth,
\begin{eqnarray}\label{bba1}\sum_{k <\ell \le  N}\frac{(\ell, k)}{\ell }&\le& C    \log (\frac{eN}{k})
\sum_{d|k} \frac{\phi(k/d)  }{(k/d) }  =  C    \log (\frac{eN}{k})
\sum_{\k|k} \frac{\phi(\k)  }{\k }  .
\end{eqnarray}
  Now, similarly by writing $\ell= \l d$, $k=\k d$, $(\ell, k)=d$, we get
$$\sum_{1\le \ell <k}\frac{(\ell, k)}{\ell \vee k}= \sum_{d|k }\frac{1}{  (k/d)} \sum_{\frac{1}{d}\le \l <k/d\atop (\l,\k)=1} 1
\le\sum_{d |k}\frac{\phi(k/d)}{  k/d}=\sum_{\k |k}\frac{\phi(\k)}{  \k}   .
$$
 Consequently,
 $$\sum_{1\le \ell \le  N\atop \ell\not= k}\frac{(\ell, k)}{\ell \vee k} \le C\log (\frac{eN}{k})  \sum_{\k |k}\frac{\phi(\k)}{  \k} . $$
The proof is now complete.
 \end{proof}

  We pass to mean estimates.  Lemma \ref{quadraf} implies
$$   \Big|  \sum_{i,j=1 }^n  x_ix_j\a_{i,j}-\sum_{i =1 }^n  x_i^2\a_{i,i} \Big|\le  \frac1{2}\sum_{ i=1}^n x_i^2\Big(  \sum_{\ell
=1\atop
\ell\not = i }^n (|\a_{i,\ell} | +|\a_{
\ell ,i} |) \Big)  ,
$$
which is   extremely useful.
 Another  simple consequence   concerns Riesz sequences.
\begin{definition}   \label{rieszb}
 A sequence of vectors $\{v_i ,i\ge 1\}$  in a Hilbert space
$ H$ is called a   Riesz sequence  if there exist positive constants  $C_1$, $C_2$  such that
$$ C_1 \Big( \sum_{i=1}^n \vert x_i\vert^2 \Big) \le
\Big\| \sum_{i=1}^n x_i v_i \Big\|^2 \le  C_2 \big( \sum_{i=1}^n
\vert x_i\vert^2 \big)  ,
$$ for all sequences of scalars
$\{x_i ,1\le i\le n\}$.
 \end{definition}
\begin{theorem}\label{rs} Let ${\bf v}=\{v_i ,i\ge 1\}$ be  a sequence of vectors    in a Hilbert space $ H$ such that
\begin{equation}\label{rieszbcond}   \sup_{i\ge 1}\sum_{  j\not = i }   |\langle v_i, v_j\rangle | < \inf_{i\ge 1}\|v_i\|^2.
\end{equation}
 Then $\{v_i ,i\ge 1\}$ is a Riesz sequence. \end{theorem}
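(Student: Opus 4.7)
The plan is to deduce both Riesz inequalities simultaneously by expanding $\|\sum_{i=1}^n x_iv_i\|^2$, splitting it into diagonal and off-diagonal parts, and controlling the off-diagonal part by Lemma \ref{quadraf}. Set $A=\sup_{i}\sum_{j\neq i}|\langle v_i,v_j\rangle|$, $B=\inf_i\|v_i\|^2$ and $B'=\sup_i\|v_i\|^2$. The hypothesis \eqref{rieszbcond} reads $A<B$; for the upper Riesz bound one also needs $B'<\infty$, which is implicit in the notion of a Riesz sequence (otherwise a single $x_i$ with $\|v_i\|$ large already defeats any constant $C_2$).

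Writing
\[
\Big\|\sum_{i=1}^n x_i v_i\Big\|^2 = \sum_{i=1}^n |x_i|^2\|v_i\|^2 + \sum_{1\le i\neq j\le n} x_i\overline{x_j}\,\langle v_i,v_j\rangle,
\]
I would apply Lemma \ref{quadraf} to the off-diagonal sum with $\alpha_{i,j}=\langle v_i,v_j\rangle$. Since $|\langle v_i,v_\ell\rangle|=|\langle v_\ell,v_i\rangle|$, the factor $\tfrac12$ in Lemma \ref{quadraf} combines with the doubled inner sum to give
\[
\bigg|\sum_{i\neq j} x_i\overline{x_j}\,\langle v_i,v_j\rangle\bigg|\le \sum_{i=1}^n|x_i|^2\sum_{\ell\neq i}|\langle v_i,v_\ell\rangle|\le A\sum_{i=1}^n|x_i|^2.
\]
Combined with the trivial bounds $B\sum|x_i|^2\le \sum|x_i|^2\|v_i\|^2\le B'\sum|x_i|^2$ this yields
\[
(B-A)\sum_{i=1}^n|x_i|^2 \;\le\; \Big\|\sum_{i=1}^n x_iv_i\Big\|^2 \;\le\; (B'+A)\sum_{i=1}^n|x_i|^2,
\]
and the strict inequality $A<B$ supplies $C_1:=B-A>0$ while $C_2:=B'+A<\infty$.

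There is really no serious obstacle beyond a small bookkeeping point: Lemma \ref{quadraf} is stated with $x_ix_j$, whereas the cross terms here are $x_i\overline{x_j}$; for complex coefficients the proof of Lemma \ref{quadraf} goes through verbatim using the pointwise bound $|x_i\overline{x_j}|\le (|x_i|^2+|x_j|^2)/2$, so only $|x_i|^2$ appears on the right. The conceptual content of the argument is that the hypothesis \eqref{rieszbcond} forces strict diagonal dominance of the Gram matrix $(\langle v_i,v_j\rangle)$, which immediately yields a two-sided comparison of the quadratic form with $\sum|x_i|^2$.
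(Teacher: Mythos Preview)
Your proof is correct and is essentially identical to the paper's: both expand $\|\sum x_i v_i\|^2$, apply Lemma \ref{quadraf} to the off-diagonal terms with $\alpha_{i,j}=\langle v_i,v_j\rangle$, and deduce the two-sided bound $(B-A)\sum|x_i|^2\le\|\sum x_iv_i\|^2\le(B'+A)\sum|x_i|^2$. Your remarks on the complex-scalar adjustment and on the implicit finiteness of $B'=\sup_i\|v_i\|^2$ are accurate side observations that the paper leaves tacit.
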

\begin{proof} Put
$$b({\bf v})= \sup_{i\ge 1}\sum_{j \ge 1
 \atop j\not = i }   |\langle v_i, v_j\rangle |.$$
   By taking
$\a_{i,j}= \langle v_i, v_j\rangle$ in Lemma \ref{quadraf}, we get
$$   \Big|  \Big\| \sum_{i=1}^n x_i v_i \Big\|^2-\sum_{i=1}^n  x_i^2\|v_i\|^2 \Big|\le  \frac1{2}\sum_{i=1}^n  x_i^2\Big( \sum_{\ell=1\atop
\ell\not = i}^n
   (|\a_{i,\ell} | +|\a_{
\ell ,i} |) \Big)\le  b({\bf v}) \sum_{i=1}^n x_i^2 .
$$
Hence,
 $$  \Big(   \inf_{i\ge 1}\|v_i\|^2-  b({\bf v}) \Big)\sum_{i=1}^n  x_i^2\le  \Big\|\sum_{i=1}^n x_i v_i \Big\|^2\le \Big( \sup_{i\ge 1}\|v_i\|^2  +
 b({\bf v}) \Big)\sum_{i=1}^n   x_i^2   .$$
 \end{proof}

 Hedenmalm, Lindquist and Seip \cite{HLS1}, \cite{HLS2} proved   that if
   $g(t)\sim \sum_{k=1}^\infty \p_k\cos 2\pi kt$, $g\in L^2(\T)$,
then $\{g_n, n\ge 1\}$ (recall that $g_n(x)=g(nx)$) is a Riesz sequence in $L^2(\T)$ if and
only if  the Dirichlet series
$\  \sum_{n=1}^\infty \p_nn^{-s}$\ is analytic and bounded away from 0 and
$\infty$ in the whole right half-plane  $\Re z>0$, i.e.
\begin{equation}\label{hlsdir}
\d\le \Big|\sum_{n=1}^\infty \p_nn^{-\s-it}\Big|\le \D,\qq \hbox{for} \ \s>0,
\end{equation}
with some positive constants $\d$ and $\D$.
\vskip 3pt
 In view of Theorem \ref{rs}, we deduce that a sufficient condition for (\ref{hlsdir}) to be satisfied is
\begin{equation}\label{rieszbcond1}
\sup_{i\ge 1}\sum_{j \ge 1 \atop j\not = i }
  |\langle g_i, g_j\rangle | <  \|g\|^2.  \end{equation}

Concerning the class of examples considered in the Introduction, we deduce
\begin{corollary} \label{corrb} Let $f$ be defined as in (\ref{funct}) with $1/2<s\le 1$. Let $\{n_i, i\ge 1\}$ be increasing and
satisfying
\begin{eqnarray} \label{sp2} \sup_{i\ge 1}\sum_{j \ge 1 \atop j\not = i }
  \frac{(n_i,n_j)^{2s}}{n_i^sn_j^s} < 1.
\end{eqnarray}
Then $\{f_{n_i}, i\ge 1\}$   is a Riesz sequence in $L^2(\T)$.
\end{corollary}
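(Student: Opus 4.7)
The plan is to apply Theorem \ref{rs} directly with $v_i=f_{n_i}$ in the Hilbert space $H=L^2(\T)$, so the task reduces to verifying hypothesis \eqref{rieszbcond} for this sequence of vectors.

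First I would compute the relevant inner products and norms via formula \eqref{sp}. Since $f$ is given by \eqref{funct}, we have
$$\langle f_{n_i}, f_{n_j}\rangle = \zeta(2s)\,\frac{(n_i,n_j)^{2s}}{n_i^s n_j^s},$$
and in particular $\|f_{n_i}\|_2^2 = \zeta(2s)$ for every $i$. Consequently $\inf_{i}\|f_{n_i}\|_2^2=\zeta(2s)$ and the off-diagonal sum in \eqref{rieszbcond} becomes
$$\sup_{i\ge 1}\sum_{j\ne i} |\langle f_{n_i}, f_{n_j}\rangle| \;=\; \zeta(2s)\,\sup_{i\ge 1}\sum_{j\ne i}\frac{(n_i,n_j)^{2s}}{n_i^s n_j^s}.$$

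Next I would observe that hypothesis \eqref{sp2} is exactly the statement that the quantity $\sup_{i}\sum_{j\ne i}(n_i,n_j)^{2s}/(n_i^s n_j^s)$ is strictly less than $1$. Dividing by $\zeta(2s)$ (which is positive for $s>1/2$) transforms \eqref{sp2} into the inequality
$$\sup_{i\ge 1}\sum_{j\ne i}|\langle f_{n_i}, f_{n_j}\rangle| \;<\; \zeta(2s) \;=\; \inf_{i\ge 1}\|f_{n_i}\|_2^2,$$
which is precisely \eqref{rieszbcond}. Theorem \ref{rs} then yields the Riesz sequence property with explicit constants $C_1=\zeta(2s)-b(\mathbf f)$ and $C_2=\zeta(2s)+b(\mathbf f)$, where $b(\mathbf f)$ denotes the left-hand side of \eqref{sp2} multiplied by $\zeta(2s)$.

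There is no genuine obstacle here: the argument is a one-line verification once the Gramian computation \eqref{sp} is in hand. The only subtlety worth stating in the write-up is that the bound must hold uniformly in $i$, which is already built into the formulation of \eqref{sp2}, and that the strict inequality in \eqref{sp2} is essential in order to produce the positive lower Riesz constant $C_1$.
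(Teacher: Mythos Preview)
Your proposal is correct and matches the paper's own reasoning: the corollary is stated immediately after Theorem~\ref{rs} and the inner-product formula~\eqref{sp} as a direct consequence, and your write-up makes that deduction explicit. One cosmetic slip: you should say \emph{multiplying} \eqref{sp2} by $\zeta(2s)$ (not dividing) to reach the displayed inner-product inequality, though the inequality you obtain is the right one.
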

\begin{remark} Br\'emont (\cite{Br} Theorem 1.2-i)) showed, using    M\"obius orthogonalization,  that the  
sequence $\{f_{n_k}, k\ge 1\}$   is a Riesz sequence in $L^2(\T)$ whenever 
$$n_{k+1}/n_k\ge c >1. $$
  If $c>3$, this follows immediately from Corollary \ref{corrb} 
 since\begin{eqnarray*} \sum_{j \ge 1 \atop j\not = i } 
  \frac{(n_i,n_j)^{2s}}{n_i^sn_j^s}=2  \sum_{j > i } 
  \frac{(n_i,n_j)^{2s}}{n_i^sn_j^s}\le 2\sum_{j > i }
  \big(\frac{n_i }{ n_j }\big)^s \le 2\sum_{j > i } c^{-(j-i)}=\frac{2}{c-1}<1.
 \end{eqnarray*}
For $c>1$, this is however a special case of Kac's result \cite{Ka}  later extended by Gaposhkin, since  the square modulus of continuity of $f$
$$
\o_2(\d, f)=\sup_{0<h \le \delta} \Big\{\int_0^1 |f(x+h)-f(x)|^2 \dd x
\Big\}^{1/2}
$$
  satisfies
 $
\o_2(\d, f) =\mathcal{ O}  (  \delta ^{ \e} )
 $ for some $\e>0$. Let indeed $f(x) = \sum_{m=1}^\infty \frac{\sin 2\pi mx}{m^s} 
$, 
 where $s>1/2$. Using formula (3.2) in \cite{Z} p.241, gives
$$\int_0^1 |f(x+h)-f(x)|^2 \dd x= C\sum_{m=1}^\infty \frac{\sin^2(\pi mh)}{m^{2s}}\le C\sum_{m=1}^\infty \frac{ ( mh\wedge 1) }{m^{2s}}
\le C h^{2s-1}.$$\end{remark}
 \vskip 6pt We now  give   the
 \begin{proof} [Proof of Lemma \ref{basic0}]   Putting
    $Z_j= \sum_{k\in K} c_k  e_{jk}$, we have   \begin{eqnarray}\label{basic}  \big \|\sum_{  j\in   J}a_j \sum_{k\in K} c_k  e_{jk}\big\|_2^2&=& \big \|\sum_{  j\in
J}a_jZ_j\big
\|_2^2   =    \sum_{  j\in   J}a^2_j\big \|Z_j\big \|_2^2
+  \sum_{  i\not= j\atop
i,j \in   J}a_ia_j\langle Z_i , Z_j \rangle
\cr &=&\sum_{  j\in   J}\sum_{k\in K} a^2_j c_k^2
 + \sum_{k, \ell\in K}c_kc_\ell \sum_{  i\not= j\atop
i,j \in   J}a_ia_j  {\bf 1}_{\{jk=i\ell\}} .
  \end{eqnarray}

Let $a=J_-,b=J_+$. Fix $k, \ell\in K$. The equation $jk=i\ell$,   $i\not=j$, $i,j \in   J$,   being impossible for
$k=\ell$, let
     $k<\ell$.  Writing     $d=(k, \ell)$, $k=k'd$,
$\ell=\ell'd$, the equation becomes $jn'_k=i \ell'$. General solutions are $j=u \ell'$, $i=uk'$.  Then
$$a\le  j\le b \qq \Rightarrow\qq  \frac{ad}{\ell }=\frac{a}{\ell'} \le u  \le   \frac{b}{\ell'}=\frac{bd}{\ell } .$$
Operating similarly for $i$, it follows that
 $$\frac{ (k, \ell)}{  k } a\le u  \le   \frac{ (k, \ell)}{\ell  } b.$$
 Thus  solutions exist only if $ \ell$ and  $k$ are such that
$$\frac{ \ell }{  k } \le  \frac{b}{a}. $$
  And in that case, their number is   bounded by
$$ (k,
\ell)\Big(\frac{ b }{\ell  }  -\frac{ a}{  k }  \Big).$$
 Thus
  $$\Big|\sum_{  i\not= j\atop
i,j \in   J}a_ia_j    {\bf 1}_{\{jk=i\ell\}}\Big|\le   \sup_{j\in J}  a_j^2(k,
\ell)\Big(\frac{ b }{\ell  }  -\frac{ a}{  k }  \Big)\le  (b-a) \sup_{j\in J}  a_j^2 \frac{ (k,
\ell)}{\ell  }        . $$
 But this bound remains trivially valid if $\frac{ \ell }{  k } >  \frac{b}{a}$, since the sum in the left term is empty.   The case
$\ell<k$ being identical, it follows that
$$\Big|\sum_{  i\not= j\atop i,j \in   J}a_ia_j    {\bf 1}_{\{jk=i\ell\}}\Big|\le   \sup_{j\in J}  a_j^2\Big(\frac{ (k,
\ell)}{\ell\vee k } b-\frac{ (k, \ell)}{\ell\wedge k } a \Big)\le  (b-a) \sup_{j\in J}  a_j^2 \frac{ (k,\ell)}{\ell\vee k }        . $$
 By reporting in (\ref{basic}), next using Lemma \ref{quadraf}, we get
\begin{eqnarray}\label{basicc}  \Big| \big \|\sum_{  j\in   J}a_j \sum_{k\in K} c_k  e_{jk}\big\|_2^2 - \sum_{  j\in   J}\sum_{k\in K} a^2_j c_k^2\Big| &\le &
  (b-a) \sup_{j\in J}  a_j^2  \sum_{k, \ell\in K}|c_k||c_\ell| \frac{ (k,\ell)}{\ell\vee k }
  \cr &\le &
  (b-a) \sup_{j\in J}  a_j^2  \sum_{k \in K} c_k^2\max(1,\t_K(k)) .
  \end{eqnarray}
   \end{proof}

 By combining Lemma \ref{gb}  with estimate b) of Lemma \ref{basic0}, we immediately get
\begin{corollary}\label{bound(d)} Under assumptions of Lemma \ref{basic0},\begin{eqnarray*} \Big| \big \|\sum_{k\in K}
c_k\big(\sum_{  j\in   J}a_j    e_{jk}\big)\big\|_2^2-
\sum_{  j\in   J}a^2_j\sum_{ k\in K}  c_k^2\Big|\le  C|J|
\big( \sup_{   j \in J} a_j^2  \big)\sum_{ k\in K}  c_k^2\log (\frac{eK_+}{k})A(k) .
\end{eqnarray*}
\end{corollary}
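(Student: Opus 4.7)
The plan is to combine the intermediate estimate (\ref{basicc}) obtained in the proof of Lemma \ref{basic0} with the general bound on $\t_K(k)$ provided by Lemma \ref{gb}. The statement of Lemma \ref{basic0} only records the one-sided bound, but the proof actually establishes the two-sided inequality
\begin{equation*}
\Big| \big\|\sum_{j\in J}a_j\sum_{k\in K} c_k e_{jk}\big\|_2^2 - \sum_{j\in J}\sum_{k\in K} a_j^2 c_k^2\Big| \le (J_+-J_-)\sup_{j\in J}a_j^2\sum_{k\in K}c_k^2\max(1,\t_K(k)),
\end{equation*}
which is the starting point.

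First, since $J$ consists of integers, I would bound the length $J_+-J_-$ by the cardinality $|J|$ (up to a harmless constant, or exactly if we interpret $|J|$ as $J_+-J_-+1$). Next, I would apply Lemma \ref{gb} to replace $\t_K(k)$ by $C\log(eK_+/k)A(k)$. Finally, since $A(k)\ge \phi(1)/1=1$ for every $k\ge 1$ and $\log(eK_+/k)\ge 1$ for every $k\in K$, we have
\begin{equation*}
\max(1,\t_K(k))\le 1 + \t_K(k)\le (1+C)\log(eK_+/k)A(k),
\end{equation*}
so the $\max$ can be absorbed into the product $\log(eK_+/k)A(k)$ at the cost of enlarging the constant. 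Inserting this in the two-sided estimate above yields exactly the conclusion.

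There is no real obstacle: all the analytic and arithmetical work is already done in Lemma \ref{basic0} (the diagonalization and counting of solutions of $jk=i\ell$) and in Lemma \ref{basic01} (the van Lint--Richert estimate leading to Lemma \ref{gb}). The only thing to verify carefully is the absorption of $\max(1,\t_K(k))$ into $\log(eK_+/k)A(k)$, and the identification of the factor $(J_+-J_-)$ with $|J|$ up to an absolute constant.
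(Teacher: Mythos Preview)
Your proposal is correct and is exactly the paper's approach: the one-line proof in the paper (``By combining Lemma \ref{gb} with estimate b) of Lemma \ref{basic0}, we immediately get\dots'') refers precisely to inserting the bound of Lemma \ref{gb} into the two-sided inequality (\ref{basicc}) and absorbing the $\max(1,\cdot)$, which you have spelled out carefully. Your caveat about $|J|$ versus $J_+-J_-$ is well observed; in the paper $J$ is in every application an interval of integers, so the two quantities are comparable and the point is harmless.
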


\begin{remark}  The factor $\log (\frac{eK_+}{k})$ appearing in Lemma \ref{basic01} and in   Corollary \ref{bound(d)} is very
restrictive, but seems unavoidable. However, when the coefficients $c_k$, $k\in K$ are commensurable, it can be removed.  We indeed also have,
\begin{eqnarray*}\Big| \big \|\sum_{k\in K} c_k\big(\sum_{  j\in   J}a_j    e_{jk}\big)\big\|_2^2-
 \sum_{  j\in   J}a^2_j\sum_{ k\in K}
 c_k^2\Big|\le
\sup_{k\in K}  c_k^2\Big(   \sum_{  k\in K } A(k)\Big)|J|\sup_{j\in J}  a_j^2  .
\end{eqnarray*}
We omit the proof.\end{remark}
 \vskip 5pt
We pass to    orthogonality results. Let $M\ge \m>1$. Let
$K,L ,  I,J$  be    sets of positive integers such that:
   {\it For some  integers $B,u,v\ge 0$ with $|v-u|>1$},
 \begin{equation}\label{mub}K\cup L\subset [\m^B
\m^{B+1}[ \qq{\rm and}\qq I \subset [M^u , M^{u+1}[, \    J\subset [M^v , M^{v+1}[ .
\end{equation}
  Put
$$ T_H(G) =\sum_{k\in H} c_k\sum_{j\in G }   a_j e_{kj}, \qq H\in \{K,L\},\ G\in \{I,J\}.$$
 \begin{lemma} \label{ortholem}Under assumption (\ref{mub}),
 $\langle T_K(J),T_L(I)\rangle = 0$.
\end{lemma}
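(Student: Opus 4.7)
The plan is to expand the inner product in $L^2(\T)$ using orthonormality of the exponential system $\{e_n\}_{n\in \Z}$, and then to show that the hypothesis (\ref{mub}) forces the multiplicative ranges of $kj$ and $\ell i$ to be disjoint, leaving no admissible quadruple.

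First I would expand, using $\langle e_m, e_n\rangle = \delta_{m,n}$,
\begin{equation*}
\langle T_K(J), T_L(I)\rangle \,=\, \sum_{\substack{k\in K,\, \ell\in L \\ j\in J,\, i\in I}} c_k c_\ell a_j a_i \, \langle e_{kj}, e_{\ell i}\rangle \,=\, \sum_{\substack{k\in K,\, \ell\in L \\ j\in J,\, i\in I \\ kj\,=\,\ell i}} c_k c_\ell a_j a_i .
\end{equation*}
It therefore suffices to check that no quadruple $(k,\ell,j,i)$ satisfying the constraints in (\ref{mub}) can also verify $kj = \ell i$.

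Without loss of generality I may assume $u < v$; since $u,v$ are integers with $|v-u|>1$, this forces $v-u \ge 2$. From (\ref{mub}) I then read off the two bounds
\begin{equation*}
kj \,\ge\, \m^{B} M^{v}, \qquad \ell i \,<\, \m^{B+1} M^{u+1}.
\end{equation*}
The key step is to verify that $\m^{B} M^{v} \ge \m^{B+1} M^{u+1}$, equivalently $M^{v-u-1} \ge \m$, which follows immediately from $M \ge \m > 1$ and $v-u-1 \ge 1$. Combining, $kj \ge \m^{B+1} M^{u+1} > \ell i$, which contradicts $kj = \ell i$; hence the sum above is empty and the inner product vanishes.

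The argument is essentially bookkeeping and I do not foresee a genuine obstacle; the only point that requires attention is that the half-open intervals in (\ref{mub}) together with the strict separation $|v-u|>1$ are precisely what is needed to make the comparison between $kj$ and $\ell i$ strict, thereby making the two product ranges disjoint and killing every cross term.
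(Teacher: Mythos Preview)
Your proof is correct and follows essentially the same approach as the paper's: both expand the inner product via orthonormality of the $e_n$ and then show the equation $kj=\ell i$ is impossible under (\ref{mub}). The only cosmetic difference is that the paper argues via the ratios $j/i$ and $\ell/k$ (noting $j/i\ge M$ while $\ell/k<\mu\le M$), whereas you compare the products $kj$ and $\ell i$ directly; these are equivalent formulations of the same estimate $M^{v-u-1}\ge\mu$.
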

\begin{proof}First notice that for any  $k\in K,\ell \in L  $, the ratio $\ell/k$ satisfies $1/\m<\ell/k<\m$. Now plainly,
$$\langle T_K(J),T_L(I)\rangle=\sum_{k\in K\atop  \ell \in L} c_kc_\ell \sum_{| i|\in I \atop | j|\in J }   a_ja_i
\d_{jk=i\ell}
  . $$
Suppose $v>u+1$.  Then $\frac{|j|}{|i|}\ge M^{v-(u+1)}\ge M$. The equation $jk=i\ell$ is impossible. Indeed,
$$\frac{j}{i}=\frac{\ell}{k}\qq \Rightarrow \qq M\le \frac{\ell}{k}<\m.$$
Hence a contradiction since we assumed $M\ge \m$. If $u>v+1$, then $\frac{|i|}{|j|}\ge M^{u-(v+1)}\ge M$, and we arrive
similarly to $M\le \frac{k}{\ell}<\m$. \end{proof}

  Put for any finite set $K $ of integers,
$$ T_K(v):=T_K([M^v
 ,M^{v+1}[)       .$$
\begin{corollary}\label{sumKu}\begin{eqnarray*} \big\|\sum_{k\in K} c_k f_k\big\|_2^2\le 3\sum_{u=0}^\infty \big\|
T_K(u)\big\|_2^2 .
\end{eqnarray*}
When further the coefficients $\ua$, $\uc$ have each constant signs, we also have
\begin{eqnarray*}   \sum_{u=0}^\infty \big\|
T_K(u)\big\|_2^2 \le \big\|\sum_{k\in K} c_k f_k\big\|_2^2\le 3\sum_{u=0}^\infty \big\|
T_K(u)\big\|_2^2 .
\end{eqnarray*}  \end{corollary}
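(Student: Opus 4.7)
The plan is to decompose $\sum_{k\in K} c_k f_k$ according to the $M$-adic blocks of Fourier-frequency indices of $f$, and then apply Lemma~\ref{ortholem} (with $L=K$) to kill all but nearest-neighbour cross terms. Grouping each $f_k$ by the block $|j|\in[M^u,M^{u+1}[$ to which its frequency index belongs gives the identity
$$\sum_{k\in K} c_k f_k=\sum_{u\ge 0} T_K(u),$$
and therefore
$$\Big\|\sum_{k\in K} c_k f_k\Big\|_2^2=\sum_{u\ge 0}\|T_K(u)\|_2^2+2\sum_{0\le u<v}\langle T_K(u),T_K(v)\rangle.$$
In the setting in which the corollary is meant to be applied (as in Theorem~\ref{square}, where $K\subset[\mu^B,\mu^{B+1}[$ for a single $B$ and $M\ge\mu$), Lemma~\ref{ortholem} forces $\langle T_K(u),T_K(v)\rangle=0$ whenever $|u-v|>1$. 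Only the consecutive pairs $\langle T_K(u),T_K(u+1)\rangle$ survive.

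For the upper bound, I would handle each surviving inner product by Cauchy--Schwarz combined with $2ab\le a^2+b^2$:
$$2\,|\langle T_K(u),T_K(u+1)\rangle|\le\|T_K(u)\|_2^2+\|T_K(u+1)\|_2^2.$$
Summing over $u\ge 0$ bounds the full cross sum by $2\sum_{u\ge 0}\|T_K(u)\|_2^2$, and adding this to the diagonal part produces exactly the factor $3$ that appears in both inequalities of the corollary.

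For the sharpened lower bound, I would expand each surviving inner product, exactly as in the proof of Lemma~\ref{ortholem}, as
$$\langle T_K(u),T_K(u+1)\rangle=\gamma\sum_{k,\ell\in K}\sum_{\substack{|j|\in[M^u,M^{u+1}[\\ |i|\in[M^{u+1},M^{u+2}[}} c_kc_\ell\, a_j a_i\,\mathbf{1}\{jk=i\ell\},$$
with a positive normalisation constant $\gamma$ (for instance $\gamma=1/4$ for the sine basis of~(\ref{funct})). Under the hypothesis that both $\ua$ and $\uc$ have constant signs, every summand on the right is non-negative, so each adjacent cross term is $\ge 0$, and one concludes
$$\sum_{u\ge 0}\|T_K(u)\|_2^2\le\Big\|\sum_{k\in K} c_k f_k\Big\|_2^2.$$
The only real subtlety is bookkeeping: one must verify that the orthogonality hypothesis of Lemma~\ref{ortholem} is available in the intended context (the implicit restriction $K\subset[\mu^B,\mu^{B+1}[$ with $M\ge\mu$), and in the sign argument one must check that the prefactor multiplying the indicator $\mathbf{1}\{jk=i\ell\}$ coming from the real-trigonometric expansion of $f$ is positive. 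Once this is in place the corollary is a Pythagorean decomposition followed by an elementary Cauchy--Schwarz estimate.
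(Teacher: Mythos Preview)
Your argument is correct and follows the same route as the paper. The paper's own proof is extremely terse: it writes down the identity
\[
\big\|\sum_{k\in K} c_k f_k\big\|_2^2 = \sum_{u=0}^\infty \|T_K(u)\|_2^2 + 2\sum_{u=0}^\infty \langle T_K(u),T_K(u+1)\rangle
\]
(obtained exactly as you describe, via Lemma~\ref{ortholem}) and then says only ``which easily allows to conclude''. Your Cauchy--Schwarz step for the upper bound and your non-negativity argument for the lower bound are precisely what is being left to the reader.

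Two small remarks. First, your introduction of a constant $\gamma$ is superfluous in the paper's setting: with $e_\ell(x)=e(\ell x)$ one has $\langle e_m,e_n\rangle=\delta_{m,n}$, so the cross term is literally $\sum c_k c_\ell a_j a_i\,\mathbf{1}\{jk=i\ell\}$ with no prefactor, and the sign hypothesis on $\ua,\uc$ makes each summand non-negative directly. Second, you are right to flag the implicit hypothesis $K\subset[\mu^B,\mu^{B+1}[$ with $M\ge\mu$: the corollary is only ever invoked under that restriction (see the proof of Theorem~\ref{square}), and the paper's own proof of the corollary silently assumes it when appealing to Lemma~\ref{ortholem}.
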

\begin{proof} Set  $\D (v)  =\sum_{M^v
\le| j|<M^{v+1} } a_j e_j$, $v\ge 0$.
As $f= \sum_{u=0}^\infty \D(u)$,  Lemma \ref{ortholem}  implies
\begin{eqnarray}\label{sumK}\big\|\sum_{k\in K} c_k f_k\big\|_2^2&=& \Big\|\sum_{u=0}^\infty \sum_{k\in K} c_k\D_k(u)\Big\|_2^2=
\Big\|\sum_{u=0}^\infty T_K(u)\Big\|_2^2
\cr &= &\sum_{u=0}^\infty \big\|  T_K(u)\big\|_2^2+2\sum_{u=0}^\infty\langle T_K(u),T_K(u+1)\rangle,
\end{eqnarray}
which easily  allows  to conclude. \end{proof}


 Now recall   Schur's Theorem (\cite{O}, p.\ 56).
\begin{lemma}\label{schur} Let $X$ be a bounded interval of the real line endowed with the normalized Lebesgue measure. Let $\{f_k, 1\le
k\le n\}$ be measurable functions on a measurable set $E\subset  X$, $\l(X\backslash E)>0$. These functions   can be extended   to an
orthonormal system on $X$ if and only if the following condition is satisfied
\begin{equation}\label{qos} \Big\|\sum_{k=1}^n  c_kf_k \Big\|_2^2\le \sum_{k=1}^nc_k^2\qq \qq (\forall  c_1,\ldots, c_n).
\end{equation}
\end{lemma}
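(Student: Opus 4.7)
The plan is to treat the two directions separately; the nontrivial content lies in the sufficiency, which reduces to a positive semi-definite matrix factorization.

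First, the necessity. Suppose $\{\widetilde f_k,\ 1\le k\le n\}$ is an orthonormal system on $X$ with $\widetilde f_k|_E = f_k$. Then for any scalars $c_1,\dots,c_n$,
$$ \sum_{k=1}^n c_k^2 = \Big\|\sum_{k=1}^n c_k \widetilde f_k\Big\|_{L^2(X)}^2 = \Big\|\sum_{k=1}^n c_k f_k\Big\|_{L^2(E)}^2 + \Big\|\sum_{k=1}^n c_k \widetilde f_k\Big\|_{L^2(X\setminus E)}^2 \ge \Big\|\sum_{k=1}^n c_k f_k\Big\|_{L^2(E)}^2,$$
which is (\ref{qos}).

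For the sufficiency, introduce the Gram matrix $G=(\langle f_k,f_\ell\rangle_E)_{k,\ell=1}^n$. The hypothesis (\ref{qos}) says precisely that $c^{\,t}Gc\le c^{\,t}c$ for every $c\in\R^n$, i.e.\ that the symmetric matrix $H:=I-G$ is positive semi-definite. Consequently there exists an $n\times n$ real matrix $A$ with $H={}^tA A$ (Cholesky, or spectral decomposition $H=\sum_i\mu_i v_iv_i^{\,t}$ with $\mu_i\ge 0$).

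Next I would build the orthogonal complement on $X\setminus E$. Since $\lambda(X\setminus E)>0$, the space $L^2(X\setminus E,\lambda)$ (with $\lambda$ the normalized Lebesgue measure of $X$) is infinite-dimensional, so it contains an orthonormal family $\{h_1,\dots,h_n\}$ — for example $n$ rescaled trigonometric or Haar functions supported on any subinterval of $X\setminus E$. Define
$$ g_k := \sum_{j=1}^n A_{jk}\, h_j\qquad(1\le k\le n),$$
so that by orthonormality of the $h_j$,
$$\langle g_k,g_\ell\rangle_{X\setminus E} = \sum_{j=1}^n A_{jk}A_{j\ell} = ({}^tA A)_{k\ell} = H_{k\ell} = \delta_{k\ell} - \langle f_k,f_\ell\rangle_E.$$
Finally set $\widetilde f_k := f_k\mathbf 1_E + g_k\mathbf 1_{X\setminus E}$. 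Then
$$\langle \widetilde f_k,\widetilde f_\ell\rangle_X = \langle f_k,f_\ell\rangle_E + \langle g_k,g_\ell\rangle_{X\setminus E} = \delta_{k\ell},$$
so $\{\widetilde f_k\}$ is an orthonormal system on $X$ extending the $f_k$.

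The only real obstacle is the existence of the factorization $H={}^tAA$, which is handed to us by (\ref{qos}) via $H\succeq 0$; the rest is bookkeeping, with the hypothesis $\lambda(X\setminus E)>0$ guaranteeing enough room on $X\setminus E$ to place $n$ mutually orthonormal functions needed to realize $H$ as a Gram matrix.
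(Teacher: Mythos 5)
Your proof is correct and follows exactly the route sketched in the paper: observe that \eqref{qos} means $I-G$ (with $G$ the Gram matrix on $E$) is positive semi-definite, factor it as ${}^tAA$, and realize it as the Gram matrix of functions $g_k$ built from an orthonormal family on $X\setminus E$, then glue $\widetilde f_k = f_k\mathbf 1_E + g_k\mathbf 1_{X\setminus E}$. You have simply supplied the details the paper leaves implicit; no discrepancy in method or content.
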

It is true by induction for infinite sequences. The main argument
of the proof is   that
$I-G$, where
$G$ is the Gram matrix of the system  i.e. $G=(\g_{k,\ell}  )$, $\g_{k,\ell}=\int _Ef_k  f_\ell \dd x$, is nonnegative definite. Hence,
it is possible to construct on  $E^c$ a system of functions having $I-G$ as Gram matrix.
\begin{remark} \label{wschur} More generally, if  for positive reals    $\{\d_k, 1\le
k\le n\}$ we have
\begin{equation}\label{qosw}  \Big\|\sum_{k=1}^n  c_kf_k \Big\|_2^2\le  \sum_{k=1}^n\d_k\,c_k^2\qq \qq (\forall  c_1,\ldots, c_n),\end{equation}
then $\{f_k, 1\le k\le n\}$  can be extended   to an orthogonal
system $\{\xi_k, 1\le k\le n\}$ on $X$ satisfying $\|\xi_k\|_2=\sqrt
\d_k$ for all $k$.
     Therefore   the series
$\sum_k  {c_k}  f_k $ converges almost everywhere for all sequences $\{c_k , k\ge 1\}$ such that $\{c_k\sqrt \d_k, k\ge 1\}$ is universal.
\end{remark}

\vskip 2pt


A complete characterization of universal coefficient sequences  has been recently
obtained  in \cite{P} by Paszkiewicz, solving a long standing open problem.  Let
$$A^\infty= \Big\{\sum_{k\ge m} c_k^2; m=1,2,\ldots\Big\} $$
  \begin{theorem} A sequence  of coefficients $\{c_k, k\ge 1\}$, $\sum_{k } c_k^2\le 1$ is universal if and only if there exists a
finite measure $m$ on
$A^\infty$ such that
\begin{equation} \label{mmo}\sup_{t\in A^\infty}\int_0^1 \frac{\dd \e }{\sqrt{m(  (t-\e^2, t+\e^2))}} <\infty.
\end{equation}
\end{theorem}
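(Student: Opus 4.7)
The plan is to recognize the integral in \eqref{mmo} as a Talagrand-type majorizing-measure condition for the natural $L^2$-pseudo-metric on the index set of partial sums. Indeed, for an orthonormal system $\{\varphi_k\}$ and the partial sums $S_n=\sum_{k=1}^n c_k\varphi_k$, one has $\|S_n-S_m\|_2^2=\sum_{k=m+1}^n c_k^2$ for $m<n$, so if we parametrize partial sums by the tails $t_m=\sum_{k\ge m+1}c_k^2\in A^\infty$, the $L^2$-distance between two partial sums is exactly $\sqrt{|t_m-t_n|}$. Thus $A^\infty$ carries the pseudo-metric $d(t,s)=\sqrt{|t-s|}$, and the ball of radius $\varepsilon$ around $t$ is precisely $(t-\varepsilon^2,t+\varepsilon^2)\cap A^\infty$. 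The integrand in \eqref{mmo} is therefore the classical Fernique--Talagrand majorizing-measure integrand for this metric space, and universality is the requirement that $\sup_n|S_n|$ be finite a.e.\ for every orthonormal realization of the pseudo-metric.

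For sufficiency, I would run a generic-chaining argument. Given a finite measure $m$ on $A^\infty$ satisfying \eqref{mmo}, construct at each dyadic scale $2^{-r}$ a finite net $N_r\subset A^\infty$ of radius $\le 2^{-r}$ with cardinality controlled via the Hardy--Littlewood-type bound $|N_r|\lesssim \sup_t 1/m(B_d(t,2^{-r}))$. Telescope any $S_n$ along successive approximations $\pi_r(t_n)\in N_r$, apply a Menshov--Rademacher maximal inequality on each increment level (the basic quadratic-form bound $\|\sum c_k\varphi_k\|_2^2\le\sum c_k^2$ suffices because $\{\varphi_k\}$ is orthonormal), and use Cauchy--Schwarz together with the majorizing-measure bound to sum the bounds over $r$. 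The condition \eqref{mmo} is exactly what makes the chaining series converge uniformly in $t$, yielding a.e.\ finiteness of $\sup_n|S_n|$ and hence, via a standard Banach principle / subsequence argument, a.e.\ convergence of $\sum_k c_k\varphi_k$ for every orthonormal $\{\varphi_k\}$.

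For necessity, I would argue by contrapositive: if no finite measure $m$ on $A^\infty$ satisfies \eqref{mmo}, construct an orthonormal system $\{\varphi_k\}$ for which $\sum c_k\varphi_k$ diverges on a set of positive measure. The natural approach is via a Gaussian or Rademacher proxy: randomize signs (or use a Gaussian orthonormal sequence) so that $S_n$ becomes a Gaussian/subgaussian process on $A^\infty$ with exactly the metric $d(t,s)=\sqrt{|t-s|}$. Talagrand's lower bound for majorizing measures then shows $\E\sup_n|S_n|=\infty$, and one converts this into a.e.\ divergence for a deterministic orthonormal system by a selection argument (e.g.\ a Kolmogorov-type construction: choose signs or block vectors inductively on finite pieces of $A^\infty$ where the chaining lower bound is large, making the partial sums unbounded on a fixed set of positive measure). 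A Borel--Cantelli / diagonalization step then promotes unbounded partial sums to a.e.\ divergence of the series itself.

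The hard part is the necessity direction: turning the abstract failure of the majorizing-measure integral into a concrete orthonormal system exhibiting divergent partial sums. This is what made the problem open for so long; one must exploit that $A^\infty$ is a very specific subset of $[0,\|c\|_2^2]$ (a decreasing sequence of real numbers) rather than an arbitrary metric space, and Paszkiewicz's contribution in \cite{P} is precisely the ingenious deterministic construction that matches the Talagrand lower bound in this restricted setting. In a proof proposal I would flag this as the step requiring genuine new ideas beyond the standard majorizing-measure machinery; the sufficiency direction, by contrast, is a reasonably direct adaptation of generic chaining to orthonormal (non-Gaussian) partial sums.
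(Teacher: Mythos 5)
The paper itself does not prove this theorem: it states Paszkiewicz's result and cites \cite{P}, adding only that Paszkiewicz's argument involves convolution powers of a certain natural operator. There is therefore no in-paper proof to compare your attempt against. Your sketch correctly identifies the geometric structure underlying the statement: for an orthonormal system $\{\varphi_k\}$ one has $\|S_n-S_m\|_2^2=\sum_{m<k\le n}c_k^2$, so the tail set $A^\infty$ carries the pseudo-metric $d(t,s)=\sqrt{|t-s|}$, the balls of radius $\e$ are $(t-\e^2,t+\e^2)\cap A^\infty$, and the integral in (\ref{mmo}) is the Fernique--Talagrand majorizing-measure integrand for that metric. You also rightly flag the necessity direction as the step that needed genuinely new ideas rather than off-the-shelf chaining technology.

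Where the sketch has a real gap is the sufficiency direction, which you present as ``a reasonably direct adaptation of generic chaining.'' Generic chaining closes only when increments enjoy subgaussian-type tail control; the bound $\|\sum c_k\varphi_k\|_2^2\le\sum c_k^2$ is merely an $L^2$ bound and hence gives only Chebyshev-level control at each chaining scale. Summing Chebyshev bounds over dyadic scales does not converge without introducing a logarithmic loss---exactly the loss in the Menshov--Rademacher theorem---so the chaining series you propose does not obviously close. One of the points of Paszkiewicz's theorem is precisely that the majorizing-measure condition, with no $\log$-correction, is enough for a.e.\ convergence of \emph{every} orthonormal realization; that is not something you can extract from a straightforward chaining plus Menshov--Rademacher argument, and it is why \cite{P} works through an operator-theoretic reformulation instead. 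As a proposal, then, your text is an accurate description of the shape of the theorem and an honest acknowledgment that the necessity direction lies beyond standard machinery, but the sufficiency direction is likewise nontrivial and the route you describe for it would stall.
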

  A measure $m$ such that (\ref{mmo}) holds is   called a {\it majorizing measure}.   Paszkiewicz showed with
this deep   result the great success of the majorizing measure approach, a technic which has been considerably developed over the years by
Talagrand, more recently by Bednorz,
 and also applied by the
 second named author to some   convergence problems in analysis. Paszkiewicz  further obtained other characterizations
involving   convolution powers of some natural operator.


 \section{\bf Proof of Theorem \ref{square}}

\medskip
 Let $\mathcal N=\{n_k, k\ge 1\}$ be an increasing sequence of positive integers. Let $\m>1$
and consider the "trace" of
$\mathcal N$ over the geometric partition of $\N$ associated to the sequence $\{\m^j, j\ge 0\}$, namely the sets
$$ N_j =\mathcal N\cap [\m^j, \m^{j+1}[, \qq j=0,1,\ldots $$
Some of them may be empty, so let $\{N^*_j, j\ge 0\}$ denote the subsequence obtained after having removed all empty sets.  By assumption,
 $$
\sup_{j\ge 0}\t_{\mathcal N\cap [\m^j, \m^{j+1}[}<\infty  .
$$
 Let  $K\subset N^*_j$ for some $ j$. Let $M>\m$.  Applying Lemma \ref{basic0} to
$$ T_K(v)   =\sum_{k\in K} c_k\sum_{M^v \le j\le M^{v+1} }   a_j e_{kj},$$
gives
 \begin{eqnarray}\label{inter}& &\Big| \big \|T_K(v)\big\|_2^2-  \sum_{M^v \le j\le M^{v+1} } a^2_j\sum_{ k\in K}
c_k^2\Big|\le  C \t_K M^{v+1}
\big( \sup_{M^v \le j\le M^{v+1} }  a_j^2  \big)\big(\sum_{ k\in K}  c_k^2  \big).
\end{eqnarray}
 Using  Corollary \ref{sumKu}, we can bound as follows
\begin{eqnarray}\label{sumK0}\big\|\sum_{k\in K} c_k f_k\big\|_2^2&\le & 3\sum_{v=0}^\infty \big\|
T_K(v)\big\|_2^2  \cr &\le &C (1+\t_K ) \big(\sum_{ k\in K}  c_k^2  \big)\sum_{v=0}^\infty
M^{v+1}
\big( \sup_{M^v \le j\le M^{v+1} }  a_j^2\big)
\cr &\le  &C_{\mathcal N, \m, f}   \big(\sum_{ k\in K}  c_k^2  \big) .
 \end{eqnarray}
  By taking $K= N^*_j$ and summing over   $ j$, we get
$$\sum_{j\ge 0}\big\|\sum_{k\in N^*_j} c_k f_k\big\|_2^2 \le C_{\mathcal N, \m}\|f\|_2^2 \sum_{j\ge 0}\sum_{k\in N^*_j} c_k^2\le
C_{\mathcal N, \m}\|f\|_2^2 \sum_{k\ge 0}  c_k^2, $$
as claimed.
Now, in the case the coefficient sequences have each constant signs, we appeal to the second part of Corollary \ref{sumKu} and use the
fact that $ \t_{\mathcal N\cap [\m^j, \m^{j+1}[} =o(1) $, by assumption (\ref{hyp2}) to conclude.

\bigskip
\section{\bf Proof of Theorem \ref{thN}}

\medskip
 We  decompose $f$ into a regular part and an irregular part,  $f=f^{\flat}+
f^{\sharp}$. Here $f^{\flat}=\sum_{\ell} a_\ell^{\flat}  e_\ell  $, $a_\ell^{\flat}=a_\ell
 \chi\{|a_\ell|>\e_\ell\}$ is the regular component of $f$ and will be directly controlled by means of Carleson-Hunt's theorem \cite{H}.
  For the control of  the irregular component
$f^{\sharp}$, arithmetical considerations are needed.
\vskip 2pt
  Plainly,
 \begin{eqnarray} \sup_{V\le u\le v\le W}\big|\sum_{u\le n\le v}c_nf^{\flat}_{  k_n}\big| &=& \sup_{V\le u\le v\le W}\big|\sum_{\ell}
a_\ell^{\flat}  \sum_{u\le n\le v}c_n  e_{\ell k_n} \big|
\cr &\le & \sup_{ V\le u\le v\le W}\sum_{\ell} |a_\ell^{\flat}|\big|\sum_{u\le n\le v}c_ne_{\ell k_n}\big|
\cr &\le &\sum_{\ell} |a_\ell^{\flat}|\sup_{ V\le u\le v\le W}\big|\sum_{u\le n\le v}c_ne_{\ell k_n}\big|.
\end{eqnarray}
   By using Carleson-Hunt's theorem \cite{H},
\begin{eqnarray}\label{ch}\Big\|\sup_{V\le u\le v\le W} \big|\sum_{n=u}^vc_nf^{\flat}_{  k_n}\big|\Big\|_2&\le &\sum_{\ell}
|a_\ell^{\flat}|\Big\|\sup_{ V\le u\le v\le
W}\big|\sum_{u\le n\le v}c_ne_{\ell k_n}\big|\Big\|_2
 \cr &\le & A\sup_{\ell}\Big\|\sup_{ V\le u\le v\le
W}\big|\sum_{u\le n\le v}c_ne_{\ell k_n}\big|\Big\|_2\le CA  \Big(\sum_{k=V}^Wc_k^2\Big)^{1/2}.
\end{eqnarray}
Therefore,    the     sequence $\big\{\sum_{n= 1}^N c_nf^{\flat}_{  k_n}, N\ge 1\big\}$ has oscillation  near infinity tending to zero
a.e. In other words, the series  $ \sum_{n} c_nf^{\flat}_{  k_n} $ converges a.e.
\vskip 3pt
To control the sums related to the irregular component  we need an extra lemma.
\begin{lemma} \label{basic0101}  For   any  finite set
$K$,
\begin{eqnarray*}\Big\|\sum_{k\in K}c_kf^\sharp_k \Big\|_2 \le   CB\Big(   \sum_{ k\in K}  c_k^2  \t_K(k)\Big)^{1/2}   ,
  \end{eqnarray*}
where $B$ is defined in assumption (\ref{condaj1}).
If $j_s=M^s$ for some $M>1$,  let
 $B_1=\sum_{s} M^{s+1}  \e^2_{Ms}$. Then,
\begin{eqnarray*}\Big\|\sum_{k\in K}c_kf^\sharp_k \Big\|_2 \le   CB_1^{1/2}\Big(   \sum_{ k\in K}  c_k^2  \t_K(k)\Big)^{1/2}   ,
  \end{eqnarray*}
\end{lemma}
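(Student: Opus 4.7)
The plan is to decompose $f^\sharp = \sum_\ell a_\ell^\sharp e_\ell$ (where $a_\ell^\sharp = a_\ell \chi\{|a_\ell| \le \e(\ell)\}$, so that automatically $|a_\ell^\sharp| \le \e(\ell)$) along the frequency blocks $J_r = [j_r, j_{r+1})$ furnished by hypothesis (\ref{condaj1}). Writing $f^\sharp = \sum_r g_r$ with $g_r = \sum_{j \in J_r} a_j^\sharp e_j$, monotonicity of $\e$ gives the uniform block bound $\sup_{j \in J_r} |a_j^\sharp| \le \e(j_r)$, which is the only analytic information about $f^\sharp$ that the argument uses.

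For the first inequality, I would apply Minkowski's inequality in $L^2(\T)$ to pass from $\|\sum_{k \in K} c_k f^\sharp_k\|_2$ to $\sum_r \|\sum_{k \in K} c_k (g_r)_k\|_2$. Each inner norm is then estimated by Lemma \ref{basic0} with $J = J_r$, using $|J_r| \le j_{r+1}$ and $\sup_{j \in J_r} |a_j^\sharp|^2 \le \e(j_r)^2$, yielding
$$\Big\|\sum_{k \in K} c_k (g_r)_k\Big\|_2^2 \le C\, j_{r+1}\, \e(j_r)^2 \sum_{k \in K} c_k^2 \max(1, \t_K(k)).$$
Extracting the square root, factoring out $\bigl(\sum_{k \in K} c_k^2 \max(1,\t_K(k))\bigr)^{1/2}$, and recognizing $\sum_r j_{r+1}^{1/2} \e(j_r) = B$ deliver the first bound.

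For the geometric case $j_s = M^s$, Minkowski would produce the quantity $B$ rather than the strictly smaller $B_1^{1/2}$, so I would replace Minkowski by the near-orthogonality of successive $M$-adic blocks supplied by Corollary \ref{sumKu} (a consequence of Lemma \ref{ortholem}) applied to $f^\sharp$ in place of $f$. This gives $\|\sum_{k \in K} c_k f^\sharp_k\|_2^2 \le 3 \sum_v \|T_K^\sharp(v)\|_2^2$, where $T_K^\sharp(v) := \sum_{k \in K} c_k \sum_{M^v \le j < M^{v+1}} a_j^\sharp e_{kj}$. Bounding each $\|T_K^\sharp(v)\|_2^2$ by Lemma \ref{basic0} with $\sup_{M^v \le j < M^{v+1}} |a_j^\sharp|^2 \le \e(M^v)^2$ and summing over $v$ produces an upper bound proportional to $\bigl(\sum_{v \ge 0} M^{v+1} \e(M^v)^2\bigr) \sum_{k \in K} c_k^2 \max(1, \t_K(k)) = B_1 \sum_{k \in K} c_k^2 \max(1, \t_K(k))$; taking the square root finishes the argument.

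The main obstacle is bookkeeping rather than analytic: one must choose the auxiliary parameter $\mu$ in Corollary \ref{sumKu} large enough that the finite set $K$ lies in a single interval $[\mu^B, \mu^{B+1})$ (so that Lemma \ref{ortholem} applies with $M \ge \mu$), and one must reconcile the $\max(1, \t_K(k))$ factor delivered by Lemma \ref{basic0} with the $\t_K(k)$ appearing in the target statement—this discrepancy is harmless since $\max(1, \t_K(k)) \le 1 + \t_K(k)$ and the extra additive piece can be absorbed into the absolute constant $C$.
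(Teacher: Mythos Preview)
Your argument is correct and matches the paper's proof line for line: Minkowski plus Lemma \ref{basic0} on the blocks $J_s=[j_s,j_{s+1})$ for the first estimate, then Corollary \ref{sumKu} (near-orthogonality of $M$-adic blocks) plus Lemma \ref{basic0} for the geometric case. One small correction to your final remark: the passage from $\max(1,\t_K(k))$ to $\t_K(k)$ cannot in general be absorbed into $C$ (take $K$ with very small $\t_K(k)$); the paper makes the same silent transition, and the discrepancy is ultimately harmless only because the application in Theorem \ref{thN} retains the $\max$ in condition (\ref{cconv1}).
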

 \begin{proof}
  Let $J_s= [j_s, j_{s+1}[$.   By Lemma \ref{basic0},
 \begin{eqnarray*}\|\sum_{k\in
K}c_k\sum_{j\in J_s }a_j^{\sharp} e_{kj}\|_2^2& \le &\sum_{  j\in   J_s }\sum_{k\in K} {a_j^{\sharp}}^2 c_k^2+ C(j_{s+1}-j_{s })
\e_{j_s}^2  \sum_{ k\in K}  c_k^2  \t_K(k)
  \cr &\le &     C(j_{s+1}-j_{s })
\e_{j_s}^2\Big(   \sum_{ k\in K}
c_k^2 \max(1, \t_K(k))\Big).
\end{eqnarray*}
 Thus
\begin{eqnarray*}\big\|\sum_{k\in K}c_kf^\sharp_k \big\|_2 &\le&   \sum_s \big\|\sum_{k\in K}c_k\sum_{j\in J_s}a_j^\sharp e_{kj}
\big\|_2   \le   C\Big(\sum_{s} j_{s+1}^{1/2} \e_{j_s}\Big) \Big(   \sum_{ k\in K}
c_k^2  \t_K(k)\Big)^{1/2}
\cr &  = & CB\Big(   \sum_{ k\in K}  c_k^2  \t_K(k)\Big)^{1/2}
\end{eqnarray*}
Further, when $j_s=M^s$ for some $M>1$,
 by Corollary \ref{sumKu}, next Lemma \ref{basic0},
\begin{eqnarray}\big\|\sum_{k\in K}c_kf^\sharp_k \big\|_2^2 &\le&   C\sum_s \big\|\sum_{k\in K}c_k\sum_{j\in J_s}a_j^\sharp e_{kj}
\big\|_2^2   \le   C\Big(\sum_{s} j_{s+1}  \e^2_{j_s}\Big) \Big(   \sum_{ k\in K}
c_k^2  \t_K(k)\Big)
\cr &  = & CB_1\Big(   \sum_{ k\in K}  c_k^2  \t_K(k)\Big) .
\end{eqnarray}
  \end{proof}

Now we can finish the proof of Theorem \ref{thN}.
   By   Remark \ref{wschur},
 the series
 $\sum_{n\ge 1}
c_{ n} {f^\sharp_{k_n}} $,  converges a.e. for any coefficient sequence $\{c_n, n\ge 1\}$ such that $
 \{c_n {\sqrt{\t_K({k_n})}}, n\ge 1\}$ is universal. And this follows from assumption (\ref{cconv1}).
 Since we have seen that the series $ \sum_{n}   c_n  f^\flat_{k_n} $ converges a.e., we deduce that the series $ \sum_{n}   c_n
f_{k_n}
$ converges a.e.

\bigskip
\section{\bf Proof of Corollary \ref{cNc}}

\medskip
   By Rademacher-Menshov's Theorem, the sequence $\{c_n {\sqrt{\t_K({k_n})}}, n\ge 1\}$ is universal
if
 $\sum_n c_n^2\t_K({k_n} )(\log n)^2<\infty$.
But by Lemma \ref{gb},
\begin{eqnarray*}\sum_v\sum_{2^v<n\le 2^{v+1}} c_n^2\t_K({k_n} )(\log n)^2&\le& C\sum_v\sum_{2^v<n\le 2^{v+1}} c_n^2A_K({k_n} )(\log
n)^2
\cr & = & C\sum_n c_n^2A_K({k_n} )(\log n)^2<\infty,
\end{eqnarray*}
by assumption. Hence, by    Theorem \ref{thN}  the series  $ \sum_{n}   c_n  f_{k_n} $ converges a.e. Taking in particular $K=\N$,
yields that $ \sum_{n}   c_n  f_{ n}
$ converges a.e., whenever
 $\sum_n c_n^2A (n )(\log n)^2<\infty$.
 \vskip 5pt
\begin{remark} Let $\{k_n, n\ge 1\}$ be an arbitrary increasing sequence of integers. The part of the proof concerning $f^\flat$ also
implies  that the series $\sum_{n} c_n f_{k_n}$ converges a.e. whenever $f\in
\hbox{Lip}_\alpha(\T)$ with $\a>1/2$, and for any coefficient sequence such that $\sum_n c_n^2<\infty$, which much improves upon
Corollaries 2.3, 2.3*, 2.5*, 2.6 in \cite{BW}.
\end{remark}
\section{\bf Proof of Theorem \ref{cNc1}}
We produce it   for $k_n=n$, the case of an arbitrary increasing sequence $k_n$ being  treated identically. By specializing (\ref{61}) for
$K=[2^r, 2^{r+1}]$, we get   
  \begin{eqnarray}\label{610}\qq    \big\|\sum_{2^r\le k< 2^{r+1}} c_k f_k\big\|_2^2  \le
  C_s  \sum_{2^r\le k< 2^{r+1}} \s_{1-2s}(k) c_k^2
  ,  \end{eqnarray}
when $1/2< s\le 1$. 
Thus by assumption (\ref{cconv2})
\begin{eqnarray*}
 \sum_{r=1}^\infty \int_0^1 r^2\Big| \sum_{j=2^r+1}^{2^{r+1}}c_j
f_j(x)\Big|^2 dx&\le& C_s\, 
\sum_{r=1}^\infty r^2 \sum_{k=2^r+1}^{2^{r+1} }\s_{1-2s}(k) c_k^2\cr
& \le & C_s\,\sum_{r=1}^\infty \sum_{j=2^r+1}^{2^{r+1}} \s_{1-2s}(k) c_k^2 (\log k)^2
 <\infty.
\end{eqnarray*}
Therefore
$$
\sum_{r=1}^\infty r^2\Big| \sum_{j=2^r+1}^{2^{r+1}}c_j
f_j(x)\Big|^2< \infty \qquad \text{a.e.}
$$
And the Cauchy-Schwarz inequality yields for any $1\le M<N$
\begin{eqnarray*}
  \Big|\sum_{j=2^M+1}^{2^N} c_j f_j(x)\Big|^2 &\le &\bigg(
\sum_{k=M}^{N-1} \Big|\sum_{j=2^k+1}^{2^{k+1}} c_j
f_j(x)\Big|\bigg)^2 \cr
&\le &\bigg(\sum_{k=M}^{N-1} \frac{1}{k^2}\bigg) \bigg(
\sum_{k=M}^{N-1} k^{2} \Big|\sum_{j=2^k+1}^{2^{k+1}} c_j
f_j(x)\Big|^2 \bigg) \cr
& \le &2\sum_{k=M}^\infty k^{2} \Big|\sum_{j=2^k+1}^{2^{k+1}} c_j
f_j(x)\Big|^2 \to 0,
\end{eqnarray*}
as $M\to\infty$. This implies that $\sum_{j=1}^{2^m} c_j f_j(x)$ 
converges a.e.\ as $m\to\infty$. Now by using again (\ref{61}) and
standard
maximal inequalities (see e.g. \cite{W}, Lemma 8.3.4) 
 we get
\begin{eqnarray*}
 \sum_{k=1}^\infty \bigg\| \max_{2^k+1\le i\le j\le 2^{k+1}}
|\sum_{\ell=i}^j c_\ell f_\ell |\bigg\|^2  &\le& C_s\,  \sum_{k=1}^\infty
k^2 \bigg(\sum_{\ell=2^k+1}^{2^{k+1}} \s_{1-2s}(\ell) c_\ell^2 \bigg)\cr
 &\le& C_s\, 
\sum_{\ell=1}^\infty  \s_{1-2s}(\ell) c_\ell^2 (\log \ell)^2 <\infty, 
\end{eqnarray*}
which implies
\begin{equation}\label{e}
\max_{2^k+1\le i\le j\le 2^{k+1}} \Big|\sum_{\ell=i}^j c_\ell
f_\ell( x)\Big|\to 0 \qquad \text{a.e.}
\end{equation}
completing the proof of the theorem.

\bigskip
 {\bf Acknowledgment.} We wish to thank   Pennti Haukkanen for  useful references, notably the
recent article of  Julien Br\'emont, which we discovered while   this work was much advanced.

 {\baselineskip 9pt }

\end{document}